\begin{document}

\bgroup

% The following are the author's own macros. Use your own ONLY if you are
% conversant with LaTeX and know what you are doing. There must be no clash
% between your macros and the publisher's. To avoid this, use the construction
% with \bgroup (see immediately above) and \par\egroup (see at the very end
% of this specimen paper). This will make your macros local.

\newcommand{\rd}{\color{red}}

\newcommand{\rc}{\color{red}}

\newcommand{\RD}{\color{red}}

\newcommand{\bl}{\color{blue}}

\newcommand{\mg}{\color{magenta}}
\renewcommand{\mg}{}

\newcommand{\bn}{\color[rgb]{0.5,0.25,0.0}}

\newcommand{\gr}{\color[rgb]{0.0,0.45,0.0}}%SM 20-23/02/2021

\newcommand{\A}{{\cal A}}

\renewcommand{\D}{{\cal D}}

\renewcommand{\E}{{\cal E}}

\newcommand{\F}{{\cal F}}

\newcommand{\G}{{\cal G}}

\newcommand{\J}{{\cal J}}

\renewcommand{\L}{{\cal L}}

\newcommand{\Q}{{\cal Q}}

\newcommand{\C}{{\mathbb C}}
\newcommand{\R}{{\mathbb R}}
\newcommand{\N}{{\mathbb N}}

\newcommand{\T}{{\mathbb T}}
\newcommand{\Z}{\mathbb{Z}}

\newcommand{\Sc}{{\mathcal S}}

\newcommand{\sZ}{\sum_{\xi\in\Z^n}}
\renewcommand{\div}{{\rm div}}

\newcommand{\bs}{\boldsymbol}

\newcommand{\uxi}{\allmodesymb{\greeksym}{x}}

\newcommand{\mycomment}[1]{}

\allowdisplaybreaks

\title{Periodic Solutions in $\R^n$ for Stationary Anisotropic Stokes and Navier-Stokes Systems}
\index{stationary anisotropic Stokes and Navier-Stokes systems}

\titlerunning{Periodic Solutions for Stationary Anisotropic Stokes and Navier-Stokes}

\author{S. E. Mikhailov}

\authorrunning{S. E. Mikhailov}

\institute{S. E. Mikhailov\at Brunel University London, Uxbridge, UK,\hfill\break
\email{sergey.mikhailov@brunel.ac.uk}}

\maketitle

\index{periodic!solutions}
\index{periodic Sobolev spaces}
\index{solution regularity}

\abstract*{First, the solution uniqueness and existence of a stationary anisotropic (linear) Stokes system with constant viscosity coefficients in a compressible framework on $n$-dimensional flat torus are  analysed in a range of periodic Sobolev (Bessel-potential) spaces. 
By employing the Leray-Schauder fixed point theorem, the linear results are employed to show existence of solution to the stationary anisotropic (non-linear) Navier-Stokes incompressible system  on torus in a periodic Sobolev space.
Then the solution regularity results for stationary anisotropic Navier-Stokes system on torus are established.}

\section{Introduction}\label{sec:mik1}

Analysis of Stokes and Navier-Stokes equations is an established and active field of research in the applied mathematical analysis, see, e.g., \cite{Constantin-Foias1988, Galdi2011, RRS2016, Seregin2015, Sohr2001, Temam1995, Temam2001} and references therein.
In \cite{KMW2020, KMW-DCDS2021, KMW-LP2021, KMW-transv2021} this field has been extended to the transmission and boundary-value problems for stationary Stokes and Navier-Stokes equations of anisotropic fluids, particularly, with relaxed ellipticity condition on the viscosity tensor. In this chapter, we present some further results in this direction considering periodic solutions to the stationary Stokes and Navier-Stokes equations of anisotropic fluids, with an emphasis on solution regularity.  

First, the solution uniqueness and existence of a stationary, anisotropic (linear) Stokes system with constant viscosity coefficients in a compressible framework are analysed on $n$-dimensional flat torus in a range of periodic Sobolev (Bessel-potential) spaces. 
By employing the Leray-Schauder fixed point theorem, the linear results are {\mg used} to show existence of solution to the stationary anisotropic (non-linear) Navier-Stokes incompressible system  on torus in a periodic Sobolev space.
Then the solution regularity results for stationary anisotropic Navier-Stokes system on torus are established.

\section{Anisotropic Stokes and Navier-Stokes systems}\label{sec:mik2}

Let 
$\boldsymbol{\mathfrak L}$ denote a second order differential operator in the component-wise divergence form,
\begin{align*}
&(\boldsymbol{\mathfrak L}{\mathbf u})_k:=
\partial _\alpha\big(a_{kj}^{\alpha \beta }E_{j\beta }({\mathbf u})\big),\ \ k=1,\ldots ,n,
\end{align*}
were ${\mathbf u}\!=\!(u_1,\ldots ,u_n)^\top$, $E_{j\beta }({\mathbf u})\!:=\!\frac{1}{2}(\partial _ju_\beta +\partial _\beta u_j)$ are the entries of the symmetric part ${\mathbb E}({\mathbf u})$ of $\nabla {\mathbf u}$ (the gradient of ${\mathbf u}$),
and $a_{kj}^{\alpha \beta }$ are constant components of the tensor viscosity coefficient 
${\mathbb A}:=\!\left({a_{kj}^{\alpha \beta }}\right)_{1\leq i,j,\alpha ,\beta \leq n}$, cf. \cite{Duffy1978}.

Here and further on, the Einstein summation convention in repeated indices from $1$ to $n$ is used unless stated otherwise.

The following symmetry conditions are assumed (see \cite[(3.1),(3.3)]{Oleinik1992}),
\begin{align}
\label{eq:mik1}
a_{kj}^{\alpha \beta }=a_{\alpha j}^{{\mg k}\beta }=a_{k\beta }^{\alpha j}.
\end{align}

In addition, we require that tensor ${\mathbb A}$ satisfies the (relaxed) ellipticity condition  in terms of all {\it symmetric} matrices in ${\mathbb R}^{n\times n}$ with {\it zero matrix trace}, see \cite{KMW-DCDS2021,KMW-LP2021}. Thus, we assume that there exists a constant $C_{\mathbb A} >0$ such that, 
\begin{align}
\label{eq:mik2}
a_{kj}^{\alpha \beta }\zeta _{k\alpha }\zeta _{j\beta }\geq C_{\mathbb A}^{-1}|\pmb\zeta|^2\,,
\ \ &\forall\ \pmb\zeta =(\zeta _{k\alpha })_{k,\alpha =1,\ldots ,n}\in {\mathbb R}^{n\times n}\nonumber\\
&\mbox{ such that }\, \pmb\zeta=\pmb\zeta^\top \mbox{ and }
\sum_{k=1}^n\zeta _{kk}=0,
\end{align}
where $|\pmb\zeta |^2=\zeta _{k\alpha }\zeta _{k\alpha }$, and the superscript $\top $ denotes the transpose of a matrix.

The tensor ${\mathbb A}$  is endowed with the norm
\begin{align*}
\|{\mathbb A}\|:=\max\left\{|a_{kj}^{\alpha \beta }|:k,j,\alpha ,\beta =1\ldots ,n\right\}\,.
\end{align*}
Symmetry conditions \eqref{eq:mik1} lead to the following equivalent form of the operator $\boldsymbol{\mathfrak L}$
\begin{equation}
\label{eq:mik3}
\begin{array}{lll}
(\boldsymbol{\mathfrak L}{\mathbf u})_k=\partial _\alpha\big(a_{kj}^{\alpha \beta }\partial _\beta u_j\big),\ \ k=1,\ldots ,n.
\end{array}
\end{equation}

Let us also define the Stokes operator $\pmb{\mathcal L}$  as
\begin{align}
\label{eq:mik4}
\pmb{\mathcal L}({\mathbf u},p ):=\boldsymbol{\mathfrak L}{\mathbf u}-\nabla p.
\end{align}

Let ${\mathbf u}$ be an unknown vector field, $p $ be an unknown scalar field, ${\mathbf f}$ be a given vector field and $g$ be a given scalar field defined in $\T $. Then the equations
\begin{equation}
\label{eq:mik5}
\begin{array}{lll}
-\pmb{\mathcal L}({\mathbf u},p )={\mathbf f},\ {\rm{div}}\ {\mathbf u}=g \mbox{ in } \T
\end{array}
\end{equation}
determine the {\it anisotropic stationary Stokes system with viscosity tensor coefficient ${\mathbb A}=\left(A^{\alpha \beta }\right)_{1\leq \alpha ,\beta \leq n}$ in a compressible framework}.

In addition, the following nonlinear system
\begin{align}
\label{eq:mik6}
-\pmb{\mathcal L}({\mathbf u},p )+{({\mathbf u}\cdot \nabla ){\mathbf u}}={\mathbf f}\,, \ \ {\rm{div}} \, {\mathbf u}=g  \mbox{ in } \T 
\end{align}
is called the {\it anisotropic stationary Navier-Stokes system with viscosity tensor coefficient ${\mathbb A}\!=\!\left(A^{\alpha \beta }\right)_{1\leq \alpha ,\beta \leq n}$ in a compressible framework}.
If $g=0$ in \eqref{eq:mik5} and \eqref{eq:mik6}, then these equations are reduced, respectively, to the {\it incompressible anisotropic stationary Stokes and Navier-Stokes systems}.

In the {\it isotropic case}, the tensor ${\mathbb A}$ reduces to
\begin{align}
\label{eq:mik7}
a_{kj}^{\alpha \beta}=\lambda \delta _{k\alpha }\delta _{j\beta }+\mu \left(\delta_{\alpha j}\delta _{\beta k}+\delta_{\alpha \beta }\delta _{kj}\right),\ 1\leq i,j, \alpha ,\beta \leq n\,,
\end{align}
where $\lambda$ and $\mu$ are real constant parameters with
$\mu>0$ (cf., e.g., Appendix III, Part I, Section 1 in \cite{Temam2001}),
and \eqref{eq:mik3} becomes
\begin{equation}
\label{eq:mik8}
\boldsymbol{\mathfrak L}{\mathbf u}
=(\lambda +\mu)\nabla{\rm div}\,\mathbf u+\mu\Delta \mathbf u.
\end{equation}
Then it is immediate that condition \eqref{eq:mik2} is fulfilled (cf. \cite{KMW-LP2021}) and thus our results apply also to the Stokes and Navier-Stokes systems in the {\it isotropic case}. Assuming $\lambda=0$, $\mu=1$ we arrive at the classical mathematical formulations of isotropic Stokes and Navier-Stokes systems.

\section{Some function spaces on torus}\label{sec:mik3}
Let us introduce some function spaces on torus and periodic function spaces (see, e.g., 
\cite[p.26]{Agmon1965},  \cite{Agranovich2015}, \cite{McLean1991}, \cite[Chapter 3]{RT-book2010}, \cite[Section 1.7.1]{RRS2016}  
\cite[Chapter 2]{Temam1995}, 
for more details).

Let $n\ge 1$  be an integer and $\T$ be the $n$-dimensional torus that can be parametrized as the semi-open cube $\T= [0,1)^n\subset\R^n$, cf.  \cite[p. 312]{Zigmund2002}.
In what follows, ${\mathcal D}(\T)=\mathcal C^\infty(\T)$ denotes the space of infinitely smooth real or complex functions on the torus.
As usual, $\N$ denotes the set of natural numbers,  $\N_0$ the set of natural numbers complemented by 0, and $\mathbb{Z}$ the set of integers.

Let   $\pmb\xi \in \mathbb{Z}^n$ denote the $n$-dimensional vector with integer components. 
We will further need also the set 
$$\dot\Z^n:=\Z^n\setminus\{\mathbf 0\}.$$
Extending the torus parametrisation to $\R^n$, it is often useful to identify $\T$ with the quotient space $\R^n\setminus \Z^n$. 
Then the space of functions $\mathcal C^\infty(\T)$ on the torus can be identified with the space of $\T$-periodic (1-periodic) functions 
$\mathcal C^\infty_\#=\mathcal C^\infty_\#(\R^n)$ that consists of functions $\phi\in \mathcal C^\infty(\R^n)$ such that
\begin{align*}
\phi(\mathbf x+\pmb\xi)=\phi(\mathbf x)\quad \forall\,  \pmb\xi \in \mathbb{Z}^n.
\end{align*}
Similarly, the Lebesgue space on the torus $L_{p}(\T)$, $1\le p\le\infty$,  can be identified with the periodic Lebesgue space $L_{p\#}=L_{p\#}(\R^n)$ that consists of functions $\phi\in L_{p,\rm loc}(\R^n)$, which satisfy the periodicity condition \eqref{eq:mik9} for a.e. $\mathbf x$.

The space dual  to $\mathcal D(\T)$, i.e.,  the space of linear bounded functionals on $\mathcal D(\T)$,  called the space of torus distributions is denoted by $\mathcal D'(\T)$ and can be identified with the space of periodic distributions $\mathcal D'_\#$ acting {\mg on} $\mathcal C^\infty_\#$.

The toroidal/periodic Fourier transform 
mapping a  function $g\in \mathcal C_\#^\infty$ to a set of its Fourier coefficients $\hat g$ is defined as (see, e.g., \cite[Definition 3.1.8]{RT-book2010})  
\begin{align*}
 \hat g(\pmb\xi)=[\F_{\T} g](\pmb\xi):=\int_{\T}e^{-2\pi i \mathbf x\cdot\pmb\xi}g(\mathbf x)d\mathbf x,\quad \pmb\xi\in\Z^n.
\end{align*}
and can be generalised to  the Fourier transform acting on distribution a $g\in\mathcal D'_\#$.

For any $\pmb\xi\in\Z^n$, let $|\pmb\xi|:=(\sum_{j=1}^n \xi_j^2)^{-1/2}$ be the Euclidean norm in $\Z^n$ and let us denote $$\rho(\pmb\xi):=(1+|\pmb\xi|^2)^{1/2}.$$
Evidently,
\begin{align}\label{eq:mik9}
\frac{1}{2}\rho(\pmb\xi)^2\le |\pmb\xi|^2\le \rho(\pmb\xi)^2\quad\forall\,\pmb\xi\in \dot\Z^n.
\end{align}

Similar to \cite[Definition 3.2.2]{RT-book2010}, for $s\in\R$ we define the {\em periodic/toroidal Sobolev (Bessel-potential) spaces} $H_\#^s:=H_\#^s(\R^n):=H^s(\T)$, which consist of the torus distributions $ g\in\mathcal D'(\T)$, for which the norm
\begin{align}\label{eq:mik10}
\| g\|_{H_\#^s}:=\| \rho^s\widehat g\|_{\ell_2}:=\left(\sZ\rho(\pmb\xi)^{2s}|\widehat g(\pmb\xi)|^2\right)^{1/2}
\end{align}
is finite, i.e., the series in \eqref{eq:mik10} converges.
Here $\| \cdot\|_{\ell_2}$ is the standard norm in the space of square summable sequences.
By \cite[Proposition 3.2.6]{RT-book2010}, $H_\#^s$ are Hilbert spaces.

For $g\in H_\#^s$, $s\in\R$, and  $m\in\N_0$, let us consider the partial sums 
$$g_m(\mathbf x)=\sum_{\pmb\xi\in\Z^n, |\pmb\xi|\le m}\hat g(\pmb\xi)e^{2\pi i \mathbf x\cdot\pmb\xi}.$$ 
 Evidently, $g_m\in \mathcal C_\#^\infty$, $\hat g_m(\pmb\xi)=\hat g(\pmb\xi)$ if $|\pmb\xi|\le m$ and  $\hat g_m(\pmb\xi)=0$ if $|\pmb\xi|> m$.
This implies that $\|g-g_m\|_{H_\#^s}\to 0$ as $m\to\infty$ and hence we can write 
\begin{align}\label{eq:mik11}
g(\mathbf x)=\sum_{\pmb\xi\in\Z^n}\hat g(\pmb\xi)e^{2\pi i \mathbf x\cdot\pmb\xi},
\end{align}
where the Fourier series converges in the sense of norm \eqref{eq:mik10}.
Moreover, since $g$ is an arbitrary distribution from $H_\#^s$, this also implies that the space ${\mathcal C}^\infty_\#$ is dense in $H_\#^s$ for any $s\in\R$ (cf. \cite[Exercise 3.2.9]{RT-book2010}).

There holds the compact embedding $H_\#^t\hookrightarrow H_\#^s$ if $t>s$,  embeddings $H_\#^s\subset \mathcal C_\#^m$ if $m\in\N_0$, $s>m+n/2$, and moreover, $\bigcap_{s\in\R}H_\#^s={\mathcal C}^\infty_\#$ (cf. \cite[Exercises 3.2.10, 3.2.10 and Corollary 3.2.11]{RT-book2010}). Note also that the torus norms on $H_\#^s$ are equivalent to the corresponding standard (non-periodic) Bessel potential norms on $\T$ as a cubic domain, see, e.g., \cite[Section 13.8.1]{Agranovich2015}.

By \eqref{eq:mik10}, 
$\| g\|^2_{H_\#^s}=|\widehat g(\mathbf 0)|^2 +| g|^2_{H_\#^s},$ 
where
\begin{align*}
| g|_{H_\#^s}:=\| \rho^s\widehat g\|_{\dot\ell_2}:=\left(\sum_{\pmb\xi\in\dot\Z^n}\rho(\pmb\xi)^{2s}|\widehat g(\pmb\xi)|^2\right)^{1/2}
\end{align*}
is the seminorm in $H_\#^s$.

For any $s\in\R$, let us also introduce  the space
$
\dot H_\#^s:=\{g\in H_\#^s: \langle g,1\rangle_{\T}=0\}.
$
The definition implies that if $g\in \dot H_\#^s$, then $\widehat g(\mathbf 0)=0$ and 
\begin{align}\label{eq:mik12}
\| g\|_{\dot H_\#^s}=\| g\|_{H_\#^s}=| g|_{H_\#^s}=\| \rho^s\widehat g\|_{\dot\ell_2}\ .
\end{align}
Denoting 
$
\dot {\mathcal C}^\infty_\#:=\{g\in {\mathcal C}^\infty_\#: \langle g,1\rangle_{\T}=0\}
$,
then $\bigcap_{s\in\R}\dot H_\#^s=\dot {\mathcal C}^\infty_\#$.

The corresponding spaces of $n$-component vector functions/distribution are denoted as $\mathbf H_\#^s:=(H_\#^s)^n$, etc.

Note that  the norm $\|\nabla (\cdot )\|_{{\mathbf H}_\#^{0}}$
is an equivalent norm in $\dot H_\#^1$. 
Indeed, by \eqref{eq:mik11}
\begin{align*}
\nabla g(\mathbf x)=2\pi i\sum_{\pmb\xi\in\dot\Z^n}\pmb\xi e^{2\pi i \mathbf x\cdot\pmb\xi}\hat g(\pmb\xi),\quad
\widehat{\nabla g}(\pmb\xi)=2\pi i\pmb\xi \hat g(\pmb\xi)
\end{align*}
and 
then \eqref{eq:mik9} and \eqref{eq:mik12} imply
\begin{multline}\label{eq:mik13}
{\mg 2\pi^2\|g\|^2_{H_\#^1}=}2\pi^2\|g\|^2_{\dot H_\#^1}=2\pi^2|g|^2_{H_\#^1}\le \| \nabla g\|^2_{{\mathbf H}_\#^0}\\
\le 4\pi^2|g|^2_{H_\#^1}={\mg 4\pi^2\|g\|^2_{\dot H_\#^1}=}4\pi^2\| g\|^2_{H_\#^1} \quad \forall\,g\in {\mg\dot H_\#^1}.
\end{multline}
The vector counterpart of \eqref{eq:mik13} takes form
\begin{align}\label{eq:mik14}
2\pi^2\| \mathbf v\|^2_{{\mathbf H}_\#^1}{\mg=2\pi^2\| \mathbf v\|^2_{\dot{\mathbf H}_\#^1}}
\le \| \nabla \mathbf v\|^2_{(H_\#^0)^{n\times n}}
\le {\mg 4\pi^2\| \mathbf v\|^2_{\dot{\mathbf H}_\#^1}=}4\pi^2\| \mathbf v\|^2_{{\mathbf H}_\#^1} \quad \forall\,\mathbf v\in \dot {\mathbf H}_\#^1.
\end{align}

We will further need also the first Korn inequality
\begin{align}
\label{eq:mik15}
\|\nabla {\bf v}\|^2_{(L_{2\#})^{n\times n}}\leq 2\|\mathbb E ({\bf v})\|^2_{(L_{2\#})^{n\times n}}\quad\forall\, \mathbf v\in {\mathbf H}_\#^1 
\end{align}
that can be easily proved by adapting, e.g., the proof in \cite[Theorem 10.1]{McLean2000} {\mg to} the periodic Sobolev space.

Let us define the Sobolev spaces of divergence-free functions/distributions,
\begin{align*}
\dot{\mathbf H}_{\#\sigma}^{s}
&:=\left\{{\bf w}\in\dot{\mathbf H}_\#^{s}:{\div}\, {\bf w}=0\right\},\quad s\in\R,
\end{align*}
endowed with the same norm \eqref{eq:mik10}.

\section{Stationary anisotropic Stokes system on {\mg flat} torus}\label{sec:mik4}
In this section, we generalise to the isotropic and anisotropic (linear) Stokes systems in compressible framework and to a range of Sobolev spaces the analysis, available in \cite[Section 2.2]{Temam1995} 

For the unknowns $({\mathbf u},p )\in \dot{\mathbf H}_\#^s\times \dot H_\#^{s-1}$ and the given data
$({\mathbf f},g)\in\dot{\mathbf H}_\#^{s-2}\times \dot H_\#^{s-1}$, $s\in\R$, let us consider the Stokes system
\begin{align}
\label{eq:mik16}
-\pmb{\mathcal L}({\mathbf u},p )&=\mathbf{f},\\
\label{eq:mik17}
{\rm{div}}\, {\mathbf u}&=g,
\end{align}
that should be understood in the sense of distributions, i.e.,
\begin{align}
\label{eq:mik18}
-\langle\pmb{\mathcal L}({\mathbf u},p ), \pmb\phi\rangle_{\T}&=\langle\mathbf{f}, \pmb\phi\rangle_{\T}\quad\forall\,\pmb\phi\in ({\mathcal C}^\infty_\#)^n,\\
\label{eq:mik19}
\langle{\rm{div}}\, {\mathbf u}, \phi\rangle_{\T}&=\langle g, \phi\rangle_{\T}\quad\forall\,\phi\in {\mathcal C}^\infty_\#.
\end{align}
For $\dot\Z^n$, let us employ  $\bar e_{\pmb\xi}(\mathbf x)=e^{-2\pi i x\cdot\pmb\xi}$ as $\phi$ in \eqref{eq:mik19} and $\bar e_{\pmb\xi}(\mathbf x)$, multiplied by the coordinate vector, as $\pmb\phi$ in \eqref{eq:mik18}. 
Then recalling \eqref{eq:mik3}-\eqref{eq:mik4}, we arrive for each $\pmb\xi\in \dot\Z^n$ at the following algebraic system for the Fourier coefficients, $\hat u_j(\pmb\xi)$, $k=1,2,\dots,n$, and $\hat p(\pmb\xi)$.
\begin{align}
\label{eq:mik20}
4\pi^2\xi_\alpha a_{kj}^{\alpha \beta}\xi_\beta \hat u_j(\pmb\xi) + 2\pi i\xi_k\hat p(\pmb\xi)&=\hat{f}_k(\pmb\xi) \quad\forall\,\pmb\xi\in \dot\Z^n,\  k=1,2,\dots,n\\
\label{eq:mik21}
2\pi i\xi_j\hat{u}_j(\pmb\xi)&=\hat g(\pmb\xi) \quad\ \forall\,\pmb\xi\in \dot\Z^n.
\end{align}

The $(n+1) \times (n+1)$ matrix, $\mathfrak S(\pmb\xi)$, of system \eqref{eq:mik20}-\eqref{eq:mik21} is in fact the principal symbol of the anisotropic Stokes system \eqref{eq:mik16}-\eqref{eq:mik17} that was analysed in \cite[Lemma 15]{KMW-LP2021} to prove that the Stokes system is elliptic in the sense of Agmon–Douglis–Nirenberg. 
It was, particularly proved that the matrix $\mathfrak S$ is non-singular if $\pmb\xi\ne 0$ and hence the solution of system \eqref{eq:mik20}-\eqref{eq:mik21} can be represented in terms of the inverse matrix $\mathfrak S^{-1}(\pmb\xi)$ as 
\begin{align}\label{eq:mik22}
\left(\widehat{\mathbf u}(\pmb\xi) \atop \hat p(\pmb\xi)\right)=\mathfrak S^{-1}(\pmb\xi)\left(\widehat{\mathbf f}(\pmb\xi) \atop \hat g(\pmb\xi)\right) \quad\forall\,\pmb\xi\in \dot\Z^n.
\end{align}
Moreover, using the estimates for the matrix, obtained in that lemma proof, and implementing to the algebraic system the variant of Babuska-Brezzi theory given in Theorem 2.34 and Remark 2.35(i) in \cite{Ern-Guermond2004}, see also \cite[Theorem 10]{KMW-LP2021},
we obtain the following estimates for the solution of the algebraic system \eqref{eq:mik20}-\eqref{eq:mik21},
\begin{align}
\label{eq:mik23}
&|\widehat{\mathbf u}(\pmb\xi)|\le C_{uf}\frac{|\widehat{\mathbf f}(\pmb\xi)|}{|2\pi\pmb\xi|^2}
+C_{ug}\frac{|\hat{g}(\pmb\xi)|}{2\pi|\pmb\xi|},\\
\label{eq:mik24}
&
|\hat p(\pmb\xi)|\le C_{pf}\frac{|\widehat{\mathbf f}(\pmb\xi)|}{2\pi|\pmb\xi|}
+C_{pg}|\hat{g}(\pmb\xi)|\quad\forall\,\pmb\xi\in \dot\Z^n,
\end{align}
where\quad
%\begin{align*}
$
 C_{uf}=2C_{\mathbb A},\ 
C_{ug}=C_{pf}=1+2C_{\mathbb A}\|\mathbb A\|,\ 
C_{pg}=\|\mathbb A\|(1+2C_{\mathbb A}\|\mathbb A\|).
$
%\end{align*}

\begin{remark}\label{rem:mik1}
For the isotropic case \eqref{eq:mik7}, 
due to \eqref{eq:mik8},
system \eqref{eq:mik20}-\eqref{eq:mik21} reduces to
\begin{align}
\label{eq:mik25}
&4\pi^2\left[(\lambda+\mu) \pmb\xi(\pmb\xi\cdot \widehat{\mathbf u}(\pmb\xi))+\mu|\pmb\xi|^2\widehat{\mathbf u}(\pmb\xi)\right]
+2\pi i\pmb\xi\hat p(\pmb\xi)=\widehat{\mathbf f}(\pmb\xi), \quad\forall\,\pmb\xi\in \dot\Z^n,\\
\label{eq:mik26}
&2\pi i\pmb\xi\cdot \widehat{\mathbf u}(\pmb\xi)=\hat g(\pmb\xi) 
\quad\forall\,\pmb\xi\in \dot\Z^n.
\end{align}
Taking scalar product of equation \eqref{eq:mik25} with $\pmb\xi$ and employing \eqref{eq:mik26}, we obtain
\begin{align}
\label{eq:mik27}
&\hat p(\pmb\xi)=\frac{\pmb\xi\cdot\widehat{\mathbf f}(\pmb\xi)}{2\pi i|\pmb\xi|^2} +(\lambda+2\mu) {\hat g(\pmb\xi)}, \quad\forall\,\pmb\xi\in \dot\Z^n,
\end{align}
and substituting this back to \eqref{eq:mik25}, we get
\begin{align}
\label{eq:mik28}
& 
\widehat{\mathbf u}(\pmb\xi)
=\frac{1}{4\pi^2\mu|\pmb\xi|^2}\left[\widehat{\mathbf f}(\pmb\xi) 
-\pmb\xi\frac{\pmb\xi\cdot\widehat{\mathbf f}(\pmb\xi)}{|\pmb\xi|^2}\right]
+ \pmb\xi\frac{\hat g(\pmb\xi)}{2\pi i|\pmb\xi|^2},
\quad\forall\,\pmb\xi\in \dot\Z^n
\end{align}
(cf. \cite[Section 2.2]{Temam1995} for the case $s=1$, $g=0$, $\lambda=0$, and $\mu=1$).
Expressions  \eqref{eq:mik28}, \eqref{eq:mik27}  evidently satisfy estimates \eqref{eq:mik23}, \eqref{eq:mik24}.
\hfill$\square$
\end{remark}

The anisotropic Stokes system \eqref{eq:mik16}-\eqref{eq:mik17} can be re-written as  
\begin{align*}
&S\left({\mathbf u}\atop p\right)=\left({\mathbf f} \atop g\right),
\end{align*}
where
\begin{align*}
&S\left({\mathbf u}\atop p\right):=\left(-\pmb{\mathcal L}({\mathbf u},p) \atop {\rm{div}}\, {\mathbf u}\right),
\end{align*}
and for any $s\in\R$,
\begin{align}\label{eq:mik29}
{S}:\dot{\mathbf H}_\#^s\times \dot H_\#^{s-1} \to \dot{\mathbf H}_\#^{s-2}\times \dot H_\#^{s-1} 
\end{align}
is a linear continuous operator.

Now we are in the position to prove the following assertion.
\begin{theorem}\label{th:mik1}
Let condition \eqref{eq:mik2} hold.

(i)
For any
$({\mathbf f},g)\in\dot{\mathbf H}_\#^{s-2}\times \dot H_\#^{s-1}$, $s\in\R$,
the anisotropic Stokes system \eqref{eq:mik16}-\eqref{eq:mik17} in torus $\T$ has a unique solution
$({\mathbf u},p )\in \dot{\mathbf H}_\#^s\times \dot H_\#^{s-1} $, where
\begin{align}\label{eq:mik30}
{\mathbf u}(\mathbf x)=\sum_{\pmb\xi\in\dot\Z^n}e^{2\pi i x\cdot\pmb\xi}\widehat {\mathbf u}(\pmb\xi),\quad
p(\mathbf x)=\sum_{\pmb\xi\in\dot\Z^n}e^{2\pi i x\cdot\pmb\xi}\hat p(\pmb\xi)
\end{align}
with $\widehat {\mathbf u}(\pmb\xi)$ and $\hat p(\pmb\xi)$ given by \eqref{eq:mik22}. 
In addition, there exists a constant $C=C(C_{\mathbb A},n)>0$ such that
\begin{align}
\label{eq:mik31}
\|{\mathbf u}\|_{\mg\dot{\mathbf H}_\#^{s}}+\|p \|_{\mg\dot H_\#^{s-1} }
\leq C\left(\|\mathbf f \|_{\dot{\mathbf H}_\#^{s-2}}+\|g\|_{\mg\dot H_\#^{s-1} }\right)
\end{align}
and operator \eqref{eq:mik29} is an isomorphism.

(ii) Moreover, if $({\mathbf f},g)\in(\dot{\mathcal C}_\#^\infty)^n\times \dot {\mathcal C}^\infty_\#$
then $({\mathbf u},p)\in(\dot{\mathcal C}_\#^\infty)^n\times \dot {\mathcal C}^\infty_\#$.
\end{theorem}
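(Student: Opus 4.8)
The plan is to build the solution mode-by-mode from the algebraic system \eqref{eq:mik20}--\eqref{eq:mik21}, using the coefficient bounds \eqref{eq:mik23}--\eqref{eq:mik24} to control the Sobolev norms \eqref{eq:mik10}, \eqref{eq:mik12}. For uniqueness in part (i), I would first note that membership of $({\mathbf u},p)$ in $\dot{\mathbf H}_\#^s\times\dot H_\#^{s-1}$ forces $\widehat{\mathbf u}(\mathbf 0)=\mathbf 0$ and $\hat p(\mathbf 0)=0$ by the definition of the dotted spaces. Testing the distributional equations \eqref{eq:mik18}--\eqref{eq:mik19} against $\bar e_{\pmb\xi}$ (and against $\bar e_{\pmb\xi}$ times each coordinate vector), exactly as in the derivation preceding the statement, shows that the remaining Fourier coefficients must satisfy \eqref{eq:mik20}--\eqref{eq:mik21} for every $\pmb\xi\in\dot\Z^n$. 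Since $\mathfrak S(\pmb\xi)$ is non-singular for $\pmb\xi\neq\mathbf 0$ by \cite[Lemma 15]{KMW-LP2021}, these coefficients are uniquely fixed by \eqref{eq:mik22}, which yields uniqueness.

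For existence together with the bound \eqref{eq:mik31}, I would define $\widehat{\mathbf u}(\pmb\xi)$ and $\hat p(\pmb\xi)$ through \eqref{eq:mik22} and insert the bounds \eqref{eq:mik23}--\eqref{eq:mik24} into the norm definition \eqref{eq:mik10}. The spectral equivalence \eqref{eq:mik9} lets me replace each $|\pmb\xi|^{-1}$ and $|\pmb\xi|^{-2}$ by $\rho(\pmb\xi)^{-1}$ and $\rho(\pmb\xi)^{-2}$ up to absolute constants, so that $\rho(\pmb\xi)^{s}|\widehat{\mathbf u}(\pmb\xi)|$ is dominated by a constant times $\rho(\pmb\xi)^{s-2}|\widehat{\mathbf f}(\pmb\xi)|+\rho(\pmb\xi)^{s-1}|\hat g(\pmb\xi)|$, and likewise for $\rho(\pmb\xi)^{s-1}|\hat p(\pmb\xi)|$. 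Taking $\ell_2(\dot\Z^n)$ norms and applying the triangle inequality then produces \eqref{eq:mik31} with a constant depending only on $C_{\mathbb A}$, $\|\mathbb A\|$ and $n$. In particular the series \eqref{eq:mik30} converge in the stated norms, and reversing the mode-wise computation confirms that they satisfy \eqref{eq:mik18}--\eqref{eq:mik19}. Since the operator $S$ of \eqref{eq:mik29} is already linear and continuous, and is now bijective with the inverse bound \eqref{eq:mik31}, it is an isomorphism.

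Part (ii) follows by a bootstrap based on $\bigcap_{s\in\R}\dot H_\#^s=\dot{\mathcal C}^\infty_\#$. If $({\mathbf f},g)\in(\dot{\mathcal C}_\#^\infty)^n\times\dot{\mathcal C}^\infty_\#$, then $({\mathbf f},g)\in\dot{\mathbf H}_\#^{s-2}\times\dot H_\#^{s-1}$ for every $s\in\R$; applying part (i) at each $s$ and invoking uniqueness, so that the solution does not depend on $s$, places $({\mathbf u},p)$ in the intersection over all $s$, hence in $(\dot{\mathcal C}_\#^\infty)^n\times\dot{\mathcal C}^\infty_\#$.

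I do not expect a substantive obstacle, since the two nontrivial spectral facts---non-singularity of $\mathfrak S(\pmb\xi)$ and the coefficient estimates \eqref{eq:mik23}--\eqref{eq:mik24}---are imported from \cite{KMW-LP2021}, leaving the argument essentially an assembly of norm estimates. The one point demanding care is the rigorous equivalence between the distributional formulation \eqref{eq:mik18}--\eqref{eq:mik19} and the mode-wise system \eqref{eq:mik20}--\eqref{eq:mik21}: I must justify both the passage to Fourier coefficients in the uniqueness direction and the term-by-term verification that the constructed series solves the tested equations in the existence direction, where interchanging the series with the duality pairing is licensed by the convergence established above together with the density of ${\mathcal C}^\infty_\#$.
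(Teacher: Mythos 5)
Your proposal is correct and follows essentially the same route as the paper: Fourier-mode reduction to the algebraic system \eqref{eq:mik20}--\eqref{eq:mik21}, uniqueness from the invertibility of $\mathfrak S(\pmb\xi)$ together with the vanishing zero modes, the norm estimate \eqref{eq:mik31} obtained by inserting \eqref{eq:mik23}--\eqref{eq:mik24} into \eqref{eq:mik10} and using \eqref{eq:mik9}, and the intersection argument $\bigcap_{s\in\R}\dot H_\#^s=\dot{\mathcal C}^\infty_\#$ for part (ii). If anything, you are more explicit than the paper about justifying the equivalence between the distributional formulation \eqref{eq:mik18}--\eqref{eq:mik19} and the mode-wise system, a point the paper passes over in a single sentence.
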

\begin{proof}
(i) Expressions \eqref{eq:mik22} supplemented by the relations $\widehat{\mathbf u}(\mathbf 0)=\mathbf 0$, $\hat p(\mathbf 0)=0$ imply the uniqueness. 
From estimates \eqref{eq:mik23} 
and \eqref{eq:mik24} 
we obtain the estimate
\begin{align*}
\| {\mathbf u}\|_{\mg\dot{\mathbf H}_\#^{s}}&=\left(\sum_{\pmb\xi\in\dot\Z^n}\rho(\pmb\xi)^{2s}|\widehat {\mathbf u}(\pmb\xi)|^2\right)^{1/2}\nonumber\\
&\le \frac{C_{uf}}{4\pi^2}\left(\sum_{\pmb\xi\in\dot\Z^n}\rho(\pmb\xi)^{2s}
\frac{|\widehat{\mathbf f}(\pmb\xi)|^2}{|\pmb\xi|^4}\right)^{1/2}
+\frac{C_{ug}}{2\pi}\left(\sum_{\pmb\xi\in\dot\Z^n} \rho(\pmb\xi)^{2s} \frac{|\hat{g}(\pmb\xi)|^2}{|\pmb\xi|^2}\right)^{1/2}\nonumber\\
&= \frac{C_{uf}}{4\pi^2}\left(\sum_{\pmb\xi\in\dot\Z^n}\rho(\pmb\xi)^{2(s-2)}|\widehat{\mathbf f}(\pmb\xi)|^2
\frac{\rho(\pmb\xi)^4}{|\pmb\xi|^4}\right)^{1/2}\nonumber\\
&\quad+\frac{C_{ug}}{2\pi}\left(\sum_{\pmb\xi\in\dot\Z^n}\rho(\pmb\xi)^{2(s-1)}|\hat{g}(\pmb\xi)|^2
\frac{\rho(\pmb\xi)^2}{|\pmb\xi|^2}\right)^{1/2}\nonumber\\
&\le \frac{C_{uf}}{2\pi^2}\|\mathbf f \|_{\mg\dot{\mathbf H}_\#^{s-2}}
+\frac{C_{ug}}{2\pi}\sqrt{2}\|g\|_{\mg\dot H_\#^{s-1}}
\end{align*}
and the similar estimate for $\|p \|_{\mg\dot H_\#^{s-1}}$, which imply \eqref{eq:mik31} and hence inclusions in the corresponding spaces.

(ii) The inclusion $({\mathbf f},g)\in(\dot{\mathcal C}^\infty_\#)^n\times \dot {\mathcal C}^\infty_\#$ implies that $({\mathbf f},g)\in\dot{\mathbf H}_\#^{s-2}\times \dot H_\#^{s-1}$ for any $s\in\R$.
Then  by item (i) $({\mathbf u},p )\in \dot{\mathbf H}_\#^s\times \dot H_\#^{s-1} $ for any $s\in\R$  and hence
$({\mathbf u},p)\in(\dot{\mathcal C}^\infty_\#)^n\times \dot {\mathcal C}^\infty_\#$.
\hfill$\square$
\end{proof}

If $g=0$ in \eqref{eq:mik17}, we can re-formulate the Stokes system \eqref{eq:mik16}-\eqref{eq:mik17} as one vector equation
\begin{align}
\label{eq:mik32}
&-\pmb{\mathcal L}({\mathbf u},p )=\mathbf{f}
\end{align}
for the unknowns $({\mathbf u},p )\in \dot{\mathbf H}_{\#\sigma}^{s}\times \dot H_\#^{s-1}$ and the given data
${\mathbf f}\in\dot{\mathbf H}_\#^{s-2}$, $s\in\R$.
Then Theorem \ref{th:mik1} implies the following assertion.
\begin{corollary}\label{cor:mik1}
Let condition \eqref{eq:mik2} hold.

(i)
For any
${\mathbf f}\in\dot{\mathbf H}_\#^{s-2}$, $s\in\R$,
the anisotropic Stokes equation \eqref{eq:mik32} in torus $\T$ has a unique incompressible  solution
$({\mathbf u},p )\in \dot{\mathbf H}_{\#\sigma}^{s}\times \dot H_\#^{s-1} $, 
with $\widehat {\mathbf u}(\pmb\xi)$ and $\hat p(\pmb\xi)$ given by \eqref{eq:mik22}, \eqref{eq:mik30} (and particularly by \eqref{eq:mik28}, \eqref{eq:mik27}, \eqref{eq:mik30} for the isotropic case \eqref{eq:mik7})  with $g=0$.
In addition, there exists a constant $C=C(C_{\mathbb A},n)>0$ such that
\begin{align*}
\|{\mathbf u}\|_{\mg\dot{\mathbf H}_\#^{s}}+\|p \|_{\mg\dot H_\#^{s-1}}
\leq C\|\mathbf f \|_{\dot{\mathbf H}_\#^{s-2}}
\end{align*}
and the operator 
$$\pmb{\mathcal L}: \dot{\mathbf H}_{\#\sigma}^{s}\times \dot H_\#^{s-1} \to \dot{\mathbf H}_\#^{s-2}$$
 is an isomorphism.

(ii) Moreover, if ${\mathbf f}\in(\dot{\mathcal C}_\#^\infty)^n$
then $({\mathbf u},p)\in(\dot{\mathcal C}_\#^\infty)^n\times \dot {\mathcal C}^\infty_\#$.
\end{corollary}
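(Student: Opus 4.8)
The plan is to deduce everything directly from Theorem \ref{th:mik1} by specialising to $g=0$. The key observation is that the incompressible equation \eqref{eq:mik32} together with the constraint $\mathbf u\in\dot{\mathbf H}_{\#\sigma}^s$ is nothing but the full Stokes system \eqref{eq:mik16}--\eqref{eq:mik17} with right-hand side $(\mathbf f,g)=(\mathbf f,0)$. Since $0\in\dot H_\#^{s-1}$, the pair $(\mathbf f,0)$ is admissible data for Theorem \ref{th:mik1}, so no separate existence argument is required and the corollary is essentially a matter of bookkeeping.

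To establish (i), given $\mathbf f\in\dot{\mathbf H}_\#^{s-2}$ I would apply Theorem \ref{th:mik1}(i) to the data $(\mathbf f,0)$ to obtain a unique $(\mathbf u,p)\in\dot{\mathbf H}_\#^s\times\dot H_\#^{s-1}$ solving \eqref{eq:mik16}--\eqref{eq:mik17}. Because $\mathrm{div}\,\mathbf u=g=0$, the velocity automatically lies in the divergence-free subspace $\dot{\mathbf H}_{\#\sigma}^s$, and its Fourier coefficients are given by \eqref{eq:mik22}, \eqref{eq:mik30} with $g=0$ (reducing to \eqref{eq:mik28}, \eqref{eq:mik27} in the isotropic case by Remark \ref{rem:mik1}). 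The asserted estimate follows by setting $\|g\|_{\dot H_\#^{s-1}}=0$ in \eqref{eq:mik31}. Uniqueness is inherited from Theorem \ref{th:mik1}, since any two incompressible solutions are in particular solutions of the full system with the same data $(\mathbf f,0)$ and hence coincide.

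For the isomorphism statement I would argue as follows. The map $(\mathbf u,p)\mapsto\pmb{\mathcal L}(\mathbf u,p)=\boldsymbol{\mathfrak L}\mathbf u-\nabla p$ is linear and continuous from $\dot{\mathbf H}_{\#\sigma}^s\times\dot H_\#^{s-1}$ into $\dot{\mathbf H}_\#^{s-2}$: indeed $\boldsymbol{\mathfrak L}\mathbf u$ and $\nabla p$ are of orders two and one respectively, and each carries an outer derivative (a divergence, respectively a gradient), so their zeroth Fourier coefficients vanish and the mean-zero target space is the correct one. On the divergence-free subspace the operator $S$ of Theorem \ref{th:mik1} has trivial second component, so $S$ restricts to $-\pmb{\mathcal L}$; injectivity and surjectivity of $\pmb{\mathcal L}$ onto $\dot{\mathbf H}_\#^{s-2}$ then follow from the uniqueness and existence just proved (the sign being irrelevant for the isomorphism property), and the estimate supplies the bounded inverse.

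Finally, part (ii) is immediate: if $\mathbf f\in(\dot{\mathcal C}_\#^\infty)^n$ then $(\mathbf f,0)\in(\dot{\mathcal C}_\#^\infty)^n\times\dot{\mathcal C}_\#^\infty$, and Theorem \ref{th:mik1}(ii) yields $(\mathbf u,p)\in(\dot{\mathcal C}_\#^\infty)^n\times\dot{\mathcal C}_\#^\infty$. The only point needing a moment's care — and the closest thing to an obstacle in an otherwise routine deduction — is verifying that the reduced operator surjects onto the \emph{whole} space $\dot{\mathbf H}_\#^{s-2}$ rather than some proper subspace; this is precisely what the freedom to take $g=0\in\dot H_\#^{s-1}$ in Theorem \ref{th:mik1} guarantees.
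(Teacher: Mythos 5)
Your proposal is correct and follows exactly the route the paper intends: the paper offers no separate proof of Corollary \ref{cor:mik1}, stating only that it follows from Theorem \ref{th:mik1}, and your argument is precisely that specialisation to data $(\mathbf f,0)$, with the divergence-free inclusion, the estimate with $\|g\|_{\dot H_\#^{s-1}}=0$, and the restriction of $S$ to $-\pmb{\mathcal L}$ filled in correctly. No gaps.
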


\section{Stationary anisotropic Navier-Stokes system with constant coefficients on torus}\label{sec:mik5}

\subsection{Existence of a weak solution to anisotropic incompressible Navier-Stokes system on torus}

In this section, we show the existence of a weak solution of  the anisotropic Navier-Stokes system in the incompressible case with general data in $L^2$-based Sobolev spaces on a torus ${\T}$, for $n\in\{2,3\}$.
We use the well-posedness result established in Theorem \ref{th:mik1} for the Stokes system on a torus  and  the  following variant of the {\it Leray-Schauder fixed point theorem} (see, e.g., \cite[Theorem 11.3]{Gilbarg-Trudinger}).
\begin{theorem}
\label{th:mik2}
Let ${B}$ denote a Banach space and $T:{B}\to {B}$ be a continuous
and compact operator. If there exists a constant $M_0>0$ such that {$\|x\|_{B}\leq M_0$} for every pair $(\mathbf x,\theta)\in {B}\times [0,1]$ satisfying $\mathbf x=\theta T\mathbf x$, then the operator $T$ has a fixed point 
$\mathbf x_0$ $($with $\|x_0\|_B\leq M_0$$)$.
\end{theorem}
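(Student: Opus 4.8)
The plan is to deduce the statement from \emph{Schauder's} fixed point theorem (a continuous compact self-map of a nonempty closed bounded convex subset of a Banach space has a fixed point) by means of a radial truncation of $T$ that converts the a priori bound into an honest fixed point. Schauder's theorem is the deep topological ingredient; it is standard and, since the assertion is quoted from \cite{Gilbarg-Trudinger}, I would simply invoke it.

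First I would fix any radius $R>M_0$ and set $K_R:=\{x\in B:\|x\|_B\le R\}$, the closed ball of radius $R$, which is nonempty, bounded, convex and closed. Next, I introduce the radial retraction $r_R:B\to K_R$ defined by the explicit formula
\[
r_R(y):=\begin{cases} y, & \|y\|_B\le R,\\[2pt] R\,y/\|y\|_B, & \|y\|_B> R,\end{cases}
\]
which is well defined and continuous on all of $B$. I would then set $T_R:=r_R\circ T$ and verify the two hypotheses of Schauder's theorem on $K_R$: continuity of $T_R$ is immediate as a composition of continuous maps, while compactness follows because $T$ sends the bounded set $K_R$ into a relatively compact set and $r_R$, being continuous, sends relatively compact sets to relatively compact sets. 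Consequently $T_R:K_R\to K_R$ possesses a fixed point $x_0$, that is, $x_0=r_R(Tx_0)$.

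The crux is to upgrade this to a fixed point of $T$ itself, which I would achieve by a dichotomy on $\|Tx_0\|_B$. If $\|Tx_0\|_B\le R$, then $r_R(Tx_0)=Tx_0$ and already $x_0=Tx_0$. If instead $\|Tx_0\|_B> R$, then $x_0=\theta\,Tx_0$ with $\theta:=R/\|Tx_0\|_B\in(0,1)$ and $\|x_0\|_B=R$; but the a priori hypothesis applied to the pair $(x_0,\theta)$ forces $\|x_0\|_B\le M_0<R$, a contradiction. Hence only the first alternative can occur, giving $x_0=Tx_0$, and applying the hypothesis once more with $\theta=1$ yields the stated bound $\|x_0\|_B\le M_0$.

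The only genuinely nontrivial point is this retraction device, which turns the boundary case into a contradiction with the a priori estimate; everything else is a routine verification. If one were not permitted to assume Schauder's theorem, the real obstacle would migrate there, namely the approximation of the compact operator by finite-rank maps (Schauder projections) and the reduction to Brouwer's fixed point theorem in finite dimensions; under the present conventions, however, I would take Schauder's theorem as given.
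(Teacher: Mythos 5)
Your proof is correct, but note that the paper itself offers no proof of this statement to compare against: Theorem \ref{th:mik2} is quoted as a known variant of the Leray--Schauder fixed point theorem with a citation to \cite[Theorem 11.3]{Gilbarg-Trudinger}, and is then only used as a black box in the proof of Theorem \ref{th:mik4}. The relevant comparison is therefore with the cited reference, and your argument---radially truncate $T$ onto a closed ball, apply Schauder's theorem to $r_R\circ T$, then exclude the boundary alternative $\|Tx_0\|_B>R$ by playing the relation $x_0=\theta Tx_0$, $\theta=R/\|Tx_0\|_B\in(0,1)$, against the a priori bound---is exactly the classical proof given there. One point where you correctly adapt rather than transcribe: the statement here assumes the non-strict bound $\|x\|_B\leq M_0$, whereas the textbook version hypothesizes a strict inequality; choosing the truncation radius $R>M_0$ strictly (rather than $R=M_0$) is precisely what makes the contradiction $\|x_0\|_B=R>M_0$ work, and your final application of the hypothesis with $\theta=1$ legitimately yields the parenthetical bound $\|x_0\|_B\leq M_0$. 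The one ingredient you take for granted, Schauder's theorem for continuous maps with relatively compact image on a closed bounded convex set, is indeed the standard deep input and is fair to assume given that the paper itself treats the whole statement as standard.
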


Let us consider the Navier-Stokes system
\begin{align}
\label{eq:mik33}
-&\pmb{\mathcal L}({\mathbf u},p )=\mathbf{f}-({\mathbf u}\cdot \nabla ){\mathbf u},\\
\label{eq:mik34}
&{\rm{div}}\, {\mathbf u}=0,
\end{align}
for the couple of unknowns $({\mathbf u},p )\in {\mg\dot{\mathbf H}}_{\#}^{1}\times \dot H_\#^0 $ and the given data
$\mathbf f\in {\mg\dot{\mathbf H}}_{\#}^{-1} $.
As for the Stokes system, the Navier-Stokes system \eqref{eq:mik33}- \eqref{eq:mik34} can be re-written as one vector equation
\begin{align}
\label{eq:mik35}
&-\pmb{\mathcal L}({\mathbf u},p )=\mathbf{f}-({\mathbf u}\cdot \nabla ){\mathbf u}
\end{align}
for the  unknowns $({\mathbf u},p )\in \dot{\mathbf H}_{\#\sigma}^{1}\times \dot H_\#^0 $ and the given data
$\mathbf f\in {\mg\dot{\mathbf H}}_{\#}^{-1} $.

Let us denote the nonlinear operator as $\pmb{\mathit B}$, i.e., 
\begin{align}
\label{eq:mik36}
\pmb{\mathit B} {\mathbf w}:&=({\mathbf w}\cdot \nabla ){\mathbf w},\ \ \forall \, {\mathbf w}\in {\mathbf H}_{\#}^{s}, \ s\in\R.
\end{align}

\begin{theorem}\label{th:mik3}
Let the operator $\pmb{\mathit B}: {\mathbf w}\mapsto \pmb{\mathit B}\mathbf w$ be defined by \eqref{eq:mik36} and let $n\ge 2$. 

(i) If \, $0<s<n/2$
then 
\begin{align}
\label{eq:mik37}
&{\pmb{\mathit B}}:\dot{\mathbf H}_{\#\sigma}^{s}\to \dot{\mathbf H}_{\#}^{2s-1-n/2}
\end{align}
is a well defined, continuous and bounded quadratic operator, i.e., there exists $C_{n,s}>0$ such that
\begin{align}
\label{eq:mik38}
&\left\|\pmb{\mathit B} {\mathbf w}\right\|_{{\mathbf H}_{\#}^{2s-1-n/2}}
\le  C_{n,s}\|{\mathbf w}\|^2_{{\mathbf H}_{\#}^{s}} 
\quad\forall\ {\mathbf w}\in {\mathbf H}_{\#}^{s}.
\end{align}

(ii) If $s>n/2$ then  
\begin{align}\label{eq:mik39}
&{\pmb{\mathit B}}:\dot{\mathbf H}_{\#\sigma}^{s}\to \dot{\mathbf H}_{\#}^{s-1}
\end{align}
is well defined, continuous and bounded quadratic operator, i.e., there exists $C_{n,s}>0$ such that
\begin{align}
\label{eq:mik40}
&\left\|\pmb{\mathit B} {\mathbf w}\right\|_{{\mathbf H}_\#^{s-1}}
\le  C_{n,s}\|{\mathbf w}\|^2_{{\mathbf H}_{\#}^{s}} 
\quad\forall\ {\mathbf w}\in {\mathbf H}_{\#}^{s}.
\end{align}
\end{theorem}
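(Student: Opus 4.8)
The plan is to exploit the divergence-free constraint \eqref{eq:mik34} to rewrite $\pmb{\mathit B}$ in divergence form and thereby reduce both items to a single pointwise-multiplication estimate followed by one differentiation. For $\mathbf w\in\dot{\mathbf H}_{\#\sigma}^{s}$ one has $\div\,\mathbf w=0$, so
$$
(\pmb{\mathit B}\mathbf w)_k=w_j\partial_j w_k=\partial_j(w_jw_k),\qquad k=1,\ldots,n,
$$
the last identity being understood distributionally and obtained by density from $\dot{\mathcal C}^\infty_\#$. This reformulation is the crucial device: it replaces the ill-balanced product of a factor in $H_\#^{s}$ with a factor in $H_\#^{s-1}$ (which is not even well defined for small $s$) by the product of two factors of equal smoothness $s>0$, and it makes the output automatically mean-zero, since $\int_\T\partial_j(w_jw_k)\,d\mathbf x=0$ by periodicity; thus the target always lies in the dotted space. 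Here the hypothesis $n\ge2$ guarantees the divergence-free fields are nontrivial.

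Granting this, I would first prove a product estimate $\|w_jw_k\|_{H_\#^{t}}\le C\|w_j\|_{H_\#^{s}}\|w_k\|_{H_\#^{s}}$ with $t=2s-n/2$ in item (i) and $t=s$ in item (ii), and then apply the elementary bound $\|\partial_j v\|_{H_\#^{t-1}}\le 2\pi\|v\|_{H_\#^{t}}$, immediate from \eqref{eq:mik10} and $\widehat{\partial_j v}(\pmb\xi)=2\pi i\xi_j\hat v(\pmb\xi)$, losing exactly one derivative to reach $2s-1-n/2$, respectively $s-1$. Summing over $j,k$ then yields \eqref{eq:mik38} and \eqref{eq:mik40}. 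In item (ii) the product estimate with $t=s$ is merely the Banach-algebra property of $H_\#^{s}$ for $s>n/2$, a consequence of the embedding $H_\#^{s}\hookrightarrow\mathcal C_\#^{0}$ recorded in Section \ref{sec:mik3}; this case is therefore routine.

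The heart of the argument, and the step I expect to be the main obstacle, is the multiplication estimate in item (i), where $t=2s-n/2$ sits at the scaling-critical exponent $s+s-t=n/2$. I would establish it directly on the Fourier side: writing $\widehat{w_jw_k}(\pmb\xi)=\sum_{\pmb\eta}\hat w_j(\pmb\eta)\hat w_k(\pmb\xi-\pmb\eta)$ and splitting the sum according to whether $|\pmb\eta|$ or $|\pmb\xi-\pmb\eta|$ dominates, one bounds $\rho(\pmb\xi)^{t}\le C\bigl(\rho(\pmb\eta)^{t}+\rho(\pmb\xi-\pmb\eta)^{t}\bigr)$ and estimates the two resulting weighted convolutions in $\ell_2$. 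The assumption $s<n/2$ forces $t<s$ strictly, which is exactly what provides summability of the residual weight and keeps one off the forbidden endpoint $H_\#^{n/2}\not\hookrightarrow L_\infty$; this strict inequality is where the full range $0<s<n/2$ is used. Alternatively, the same estimate follows from the classical Sobolev multiplication theorem on $\R^n$ combined with the equivalence of the toroidal norms with the standard Bessel-potential norms on the cube noted in Section \ref{sec:mik3}.

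Finally, I would promote the quadratic bound to continuity by linearising. The polar bilinear map $\widetilde{\pmb{\mathit B}}(\mathbf u,\mathbf v)_k:=\partial_j(u_jv_k)$ is bounded on $\mathbf H_\#^{s}\times\mathbf H_\#^{s}$ into the relevant target by exactly the product estimate above, and $\pmb{\mathit B}\mathbf w=\widetilde{\pmb{\mathit B}}(\mathbf w,\mathbf w)$ on divergence-free fields. Hence, by bilinearity,
$$
\pmb{\mathit B}\mathbf w-\pmb{\mathit B}\mathbf v=\widetilde{\pmb{\mathit B}}(\mathbf w-\mathbf v,\mathbf w)+\widetilde{\pmb{\mathit B}}(\mathbf v,\mathbf w-\mathbf v),
$$
so $\|\pmb{\mathit B}\mathbf w-\pmb{\mathit B}\mathbf v\|\le C\,\|\mathbf w-\mathbf v\|_{\mathbf H_\#^{s}}\bigl(\|\mathbf w\|_{\mathbf H_\#^{s}}+\|\mathbf v\|_{\mathbf H_\#^{s}}\bigr)$, which is the asserted (locally Lipschitz) continuity and completes both items.
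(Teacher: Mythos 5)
Your argument is correct in substance, but it takes a genuinely different route from the paper's proof, which never passes to divergence form. The paper applies the two-factor multiplication theorem of Runst and Sickel \cite{Runst-Sickel1996} (Section 4.6.1, Theorem 1(iii) for item (i), Theorem 1(i) for item (ii)) directly to $({\mathbf v}_1\cdot\nabla){\mathbf v}_2$, i.e., to a product of a factor in $\mathbf H_\#^{s}$ with a factor in $\mathbf H_\#^{s-1}$, obtaining the bilinear bound \eqref{eq:mik41} for arbitrary (not necessarily solenoidal) fields; membership of $\pmb{\mathit B}{\mathbf u}$ in the dotted space is then checked by integration by parts, $\langle\pmb{\mathit B}{\mathbf u},1\rangle_{\T}=-\langle({\div}\,{\mathbf u}){\mathbf u},1\rangle_{\T}=\mathbf 0$, and continuity follows from the same polarization splitting you use, so that part is identical. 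Your device $(\pmb{\mathit B}\mathbf w)_k=\partial_j(w_jw_k)$ instead reduces everything to a product of two factors of \emph{equal} smoothness $s>0$ plus one differentiation; this makes the mean-zero property automatic and, as you note, is more robust at the low end of the range: the paper's product has smoothness sum $s+(s-1)=2s-1$, which is nonpositive when $s\le 1/2$ (a subrange allowed in item (i) for $n\ge 2$, where standard two-factor multiplication theorems need a positive sum), whereas your product $H_\#^{s}\cdot H_\#^{s}\to H_\#^{2s-n/2}$ has sum $2s>0$ throughout. The trade-off is that your identity, hence your versions of \eqref{eq:mik38} and \eqref{eq:mik40}, hold only for divergence-free $\mathbf w$; since the operators \eqref{eq:mik37} and \eqref{eq:mik39} are anyway defined only on $\dot{\mathbf H}_{\#\sigma}^{s}$, this costs nothing in the applications. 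Two cautions. First, the pointwise bound $\rho(\pmb\xi)^t\le C\bigl(\rho(\pmb\eta)^t+\rho(\pmb\xi-\pmb\eta)^t\bigr)$ in your Fourier-side sketch is false for $t<0$, i.e., for $s<n/4$ (take $\pmb\eta$ and $\pmb\xi-\pmb\eta$ large with $\pmb\xi$ bounded); in that subrange you must argue by duality, pairing against $H_\#^{-t}$ so that all exponents are nonnegative, or invoke your stated fallback, the classical multiplication theorem transferred through the torus--cube norm equivalence, which is essentially what the paper does. Second, for small $s$ the identity $w_j\partial_jw_k=\partial_j(w_jw_k)$ is really the \emph{definition} of $\pmb{\mathit B}$ extending \eqref{eq:mik36} from smooth solenoidal fields; this is legitimate because Fourier truncation preserves both the zero mean and the divergence-free constraint, so smooth solenoidal fields are dense in $\dot{\mathbf H}_{\#\sigma}^{s}$, but that step should be stated explicitly.
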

\begin{proof}
If a function $\mathbf w$ is periodic, then evidently the function  $\pmb{\mathit B}\mathbf w$ is periodic as well.

(i) Let  $0<s<n/2$.
Due to Theorem 1(iii) in Section 4.6.1 of \cite{Runst-Sickel1996} 
and equivalence of the Bessel potential norms on square and norms \eqref{eq:mik10} for the Sobolev spaces on torus, we have, 
\begin{align}
\label{eq:mik41}
&\left\|({\mathbf v}_1\cdot \nabla ){\mathbf v}_2\right\|_{{\mathbf H}_{\#}^{2s-1-n/2}}
\le  C_{n,s}\|{\mathbf v}_1\|_{{\mathbf H}_{\#}^{s}} \|{\mathbf v}_2\|_{{\mathbf H}_{\#}^{s}},
\quad\forall\ {\mathbf v}_1,{\mathbf v}_2\in {\mathbf H}_{\#}^{s}.
\end{align}
for some constant  $C_{n,s}>0$. 
This particularly implies estimate \eqref{eq:mik38}.

Further, if ${\mathbf u}\in \dot{\mathbf H}_{\#\sigma}^{s}$ then 
$$
\langle \pmb{\mathit B}\mathbf u,1\rangle_{\T}
=\langle {\mathbf u}\cdot \nabla ){\mathbf u} ,1\rangle_{\T}
=-\langle ({\rm div}\, {\mathbf u}){\mathbf u},1 \rangle_{\T}
=\mathbf 0
$$
since ${\rm div}\, {\mathbf u}=0$.
Together with estimate \eqref{eq:mik38} this implies that quadratic operator \eqref{eq:mik37} is well defined and bounded.

Let ${\mathbf w},{\mathbf w}'\in \dot{\mathbf H}_{\#\sigma}^{1}$.
Then by  \eqref{eq:mik41} we obtain
\begin{align*}
\big\|\pmb{\mathit B}{\mathbf w}-\pmb{\mathit B}{\mathbf w}'\big\|_{{\mathbf H}_{\#}^{2s-1-n/2}}
&\le\left\|({\mathbf w}\cdot \nabla ){\mathbf w}-({\mathbf w}'\cdot \nabla ){\mathbf w}'\right\|_{{\mathbf H}_{\#}^{2s-1-n/2}}\nonumber\\
&\le\left\|(({\mathbf w}-{\mathbf w}')\cdot \nabla ){\mathbf w} 
+ ({\mathbf w}'\cdot \nabla )({\mathbf w}-{\mathbf w}')\right\|_{{\mathbf H}_{\#}^{2s-1-n/2}}\nonumber\\
&\le  C_{n,s}\left\|{\mathbf w}-{\mathbf w}'\right\|_{{\mathbf H}_{\#}^{s}}\left( \|{\mathbf w}\|_{{\mathbf H}_{\#}^{s}}
+ \|{\mathbf w}'\|_{{\mathbf H}_{\#}^{s}}
\right).
\end{align*}
This estimate shows that operator \eqref{eq:mik37} is continuous. 

(ii) Let  $s>n/2$.
Due to Theorem 1(i) in Section 4.6.1 of \cite{Runst-Sickel1996} 
and equivalence of the Bessel potential norms and norms \eqref{eq:mik10} for the Sobolev spaces on torus, we have, 
\begin{align*}
&\left\|({\mathbf v}_1\cdot \nabla ){\mathbf v}_2\right\|_{{\mathbf H}_\#^{s-1}}
\le  C_{n,s}\|{\mathbf v}_1\|_{{\mathbf H}_{\#}^{s}} \|{\mathbf v}_2\|_{{\mathbf H}_{\#}^{s}},
\quad\forall\ {\mathbf v}_1,{\mathbf v}_2\in {\mathbf H}_{\#}^{s}.
\end{align*}
for some constant  $C_{n,s}>0$. 
This particularly implies estimate \eqref{eq:mik40} and then the boundedness of operator \eqref{eq:mik39}. By the same arguments as in item (i), one can prove that this operator is also well defined and continuous.
\hfill$\square$
\end{proof}
\begin{corollary}\label{Fcomp}
Let $n\in \{2,3\}$. 
Then the quadratic operator
\begin{align}\label{eq:mik42}
&{\pmb{\mathit B}}:\dot{\mathbf H}_{\#\sigma}^{1}\to {\mathbf H}_{\#}^{-1}
\end{align}
is well defined, continuous, bounded and compact.
\end{corollary}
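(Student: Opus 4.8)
The plan is to obtain all four claimed properties by combining the mapping properties of $\pmb{\mathit B}$ established in Theorem \ref{th:mik3}(i) with a compact Sobolev embedding into ${\mathbf H}_{\#}^{-1}$. The key observation is that it suffices to produce, for each admissible $n$, some exponent $\sigma_0 > -1$ for which $\pmb{\mathit B}$ maps $\dot{\mathbf H}_{\#\sigma}^{1}$ boundedly and continuously into $\dot{\mathbf H}_{\#}^{\sigma_0}$; then the compact embedding $H_\#^{\sigma_0} \hookrightarrow H_\#^{-1}$, available since $\sigma_0 > -1$, upgrades boundedness to compactness of the composition.

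For $n = 3$ I would apply Theorem \ref{th:mik3}(i) directly with $s = 1$, since $0 < 1 < n/2 = 3/2$. This gives that $\pmb{\mathit B} : \dot{\mathbf H}_{\#\sigma}^{1} \to \dot{\mathbf H}_{\#}^{2s - 1 - n/2} = \dot{\mathbf H}_{\#}^{-1/2}$ is well defined, bounded and continuous, with $\sigma_0 = -1/2 > -1$ as required.

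The case $n = 2$ is the main obstacle, because then $s = 1 = n/2$ is exactly the critical value excluded from the range $0 < s < n/2$ of Theorem \ref{th:mik3}(i), so that result does not apply at $s = 1$. To circumvent this I would first pass through the continuous embedding $\dot{\mathbf H}_{\#\sigma}^{1} \hookrightarrow \dot{\mathbf H}_{\#\sigma}^{s'}$ for a fixed subcritical exponent $s' \in (1/2, 1)$ (the zero-mean and divergence-free constraints are preserved and the norm does not increase). Since $0 < s' < 1 = n/2$, Theorem \ref{th:mik3}(i) now applies and yields that $\pmb{\mathit B} : \dot{\mathbf H}_{\#\sigma}^{s'} \to \dot{\mathbf H}_{\#}^{2s' - 2}$ is bounded and continuous; restricting to the subspace $\dot{\mathbf H}_{\#\sigma}^{1}$ keeps these properties with target $\dot{\mathbf H}_{\#}^{2s'-2}$. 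The choice $s' > 1/2$ guarantees $\sigma_0 := 2s' - 2 > -1$, exactly as needed, so both cases are covered.

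Finally, for compactness I would argue directly with the nonlinear quadratic operator. Let $U \subset \dot{\mathbf H}_{\#\sigma}^{1}$ be bounded. By the above, $\pmb{\mathit B}(U)$ is bounded in $\dot{\mathbf H}_{\#}^{\sigma_0}$ with $\sigma_0 > -1$, and since the embedding $H_\#^{\sigma_0} \hookrightarrow H_\#^{-1}$ is compact, $\pmb{\mathit B}(U)$ is relatively compact in ${\mathbf H}_{\#}^{-1}$; together with the continuity of the composed map $\dot{\mathbf H}_{\#\sigma}^{1} \to {\mathbf H}_{\#}^{-1}$, this is precisely the compactness of \eqref{eq:mik42}. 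Well-definedness, boundedness and continuity of \eqref{eq:mik42} follow at once by composing the maps established above with the continuous embedding $\dot{\mathbf H}_{\#}^{\sigma_0} \subset {\mathbf H}_{\#}^{-1}$.
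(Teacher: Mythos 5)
Your proposal is correct and follows essentially the same route as the paper: for $n=3$ apply Theorem \ref{th:mik3}(i) at $s=1$ to land in $\dot{\mathbf H}_{\#}^{-1/2}$, and for $n=2$ dodge the critical exponent by working at a subcritical $s'\in(1/2,1)$, then conclude compactness from a compact Sobolev embedding into $H_\#^{-1}$. The only (immaterial) difference is that in the $n=2$ case the paper places the compactness on the source-side embedding $\dot{\mathbf H}_{\#\sigma}^{1}\hookrightarrow\dot{\mathbf H}_{\#\sigma}^{s'}$ as well, whereas you use that embedding merely as continuous and let the target-side embedding $H_\#^{2s'-2}\hookrightarrow H_\#^{-1}$ carry the compactness, which works equally well.
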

\begin{proof}
Let $n=3$.
Due to Theorem~\ref{th:mik3}(i), the operator
$
{\pmb{\mathit B}}:\dot{\mathbf H}_{\#\sigma}^{1}\to \dot{\mathbf H}_{\#}^{-1/2}
$
is well defined, continuous and bounded.
On the other hand, the compactness of  embedding $H_\#^{-1/2}\hookrightarrow H_\#^{-1}$ implies  the compactness of  embedding $\dot H_\#^{-1/2}\hookrightarrow \dot H_\#^{-1}$ and hence
gives  the compactness of operator \eqref{eq:mik42} and thus the corollary claim for $n=3$.

Let now $n=2$. Then by Theorem~\ref{th:mik3}(i), the operator 
$
{\pmb{\mathit B}}:\dot{\mathbf H}_{\#\sigma}^{s}\to \dot{\mathbf H}_{\#}^{2s-2}
$
is well defined, continuous and bounded for any $s\in(1/2,1)$.
In addition, for $s\in(1/2,1)$ we also have the compact embeddings $\dot H_{\#\sigma}^{1}\hookrightarrow \dot H_{\#\sigma}^{s}$ and   $\dot H_\#^{2s-2}\hookrightarrow \dot H_\#^{-1}$ that lead to the corollary claim for $n=2$.
\hfill$\square$ 
\end{proof}

Next we show the existence of a weak solution of the Navier-Stokes equation. 
\begin{theorem}
\label{th:mik4}
Let $n\in \{2,3\}$ and condition \eqref{eq:mik2} hold.
If \,$\mathbf f\in {\mg\dot{\mathbf H}}_{\#}^{-1} $, then
the anisotropic Navier-Stokes equation \eqref{eq:mik35}
has a solution $({\mathbf u},p)\in \dot{\mathbf H}_{\#\sigma}^{1} \times \dot H_\#^0 $.
\end{theorem}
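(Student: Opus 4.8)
\section*{Proof proposal}

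The plan is to recast equation \eqref{eq:mik35} as a fixed-point equation on the Banach space $B:=\dot{\mathbf H}_{\#\sigma}^{1}$ and invoke the Leray-Schauder theorem (Theorem~\ref{th:mik2}). For $\mathbf w\in\dot{\mathbf H}_{\#\sigma}^{1}$ the datum $\mathbf f-\pmb{\mathit B}\mathbf w$ lies in $\dot{\mathbf H}_\#^{-1}$: indeed $\mathbf f\in\dot{\mathbf H}_\#^{-1}$ by hypothesis, while $\pmb{\mathit B}\mathbf w\in\mathbf H_\#^{-1}$ by Corollary~\ref{Fcomp}, and the computation $\langle\pmb{\mathit B}\mathbf w,1\rangle_{\T}=0$ (already carried out in the proof of Theorem~\ref{th:mik3}(i) using $\div\,\mathbf w=0$) shows it has zero mean. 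Hence Corollary~\ref{cor:mik1} applies and I can define $T\mathbf w:=\mathbf u$, where $(\mathbf u,p)\in\dot{\mathbf H}_{\#\sigma}^{1}\times\dot H_\#^{0}$ is the unique solution of the Stokes equation $-\pmb{\mathcal L}(\mathbf u,p)=\mathbf f-\pmb{\mathit B}\mathbf w$. A fixed point $\mathbf u=T\mathbf u$, together with its associated pressure, then solves \eqref{eq:mik35}.

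Next I would verify the hypotheses of Theorem~\ref{th:mik2}. The map $T$ is the composition of the continuous quadratic operator $\mathbf w\mapsto\mathbf f-\pmb{\mathit B}\mathbf w$ (Corollary~\ref{Fcomp}) with the bounded linear solution operator $\pmb{\mathcal L}^{-1}$ of Corollary~\ref{cor:mik1}, so $T$ is continuous; and since $\pmb{\mathit B}:\dot{\mathbf H}_{\#\sigma}^{1}\to\mathbf H_\#^{-1}$ is compact by Corollary~\ref{Fcomp} while $\pmb{\mathcal L}^{-1}$ is bounded, $T$ is compact. It remains to produce a constant $M_0$ bounding every $\mathbf u\in\dot{\mathbf H}_{\#\sigma}^{1}$ satisfying $\mathbf u=\theta T\mathbf u$ for some $\theta\in[0,1]$, which is equivalent to $-\pmb{\mathcal L}(\mathbf u,p)=\theta\mathbf f-\theta(\mathbf u\cdot\nabla)\mathbf u$ with $p$ the corresponding pressure.

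For the a priori bound I would test this equation against $\mathbf u$ using the duality pairing between $\dot{\mathbf H}_\#^{-1}$ and $\dot{\mathbf H}_\#^{1}$. Three cancellations drive the estimate: the pressure contributes $\langle\nabla p,\mathbf u\rangle_{\T}=-\langle p,\div\,\mathbf u\rangle_{\T}=0$; the convective term contributes $\langle(\mathbf u\cdot\nabla)\mathbf u,\mathbf u\rangle_{\T}=0$ because $\div\,\mathbf u=0$; and the viscous term satisfies, after integration by parts and the symmetry \eqref{eq:mik1}, $-\langle\boldsymbol{\mathfrak L}\mathbf u,\mathbf u\rangle_{\T}=\int_{\T}a_{kj}^{\alpha\beta}E_{j\beta}(\mathbf u)E_{k\alpha}(\mathbf u)\,d\mathbf x$. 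The crucial structural point is that for divergence-free $\mathbf u$ the matrix $\mathbb E(\mathbf u)$ is symmetric with zero trace pointwise, so the relaxed ellipticity condition \eqref{eq:mik2} applies and bounds this quantity below by $C_{\mathbb A}^{-1}\|\mathbb E(\mathbf u)\|^2_{(L_{2\#})^{n\times n}}$. Combining with the Korn inequality \eqref{eq:mik15} and the norm equivalence \eqref{eq:mik14} yields coercivity,
\begin{align*}
-\langle\boldsymbol{\mathfrak L}\mathbf u,\mathbf u\rangle_{\T}\ge \pi^2C_{\mathbb A}^{-1}\|\mathbf u\|^2_{\dot{\mathbf H}_\#^{1}},
\end{align*}
while the right-hand side is controlled by $\theta\langle\mathbf f,\mathbf u\rangle_{\T}\le\|\mathbf f\|_{\dot{\mathbf H}_\#^{-1}}\|\mathbf u\|_{\dot{\mathbf H}_\#^{1}}$. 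Dividing out one factor of $\|\mathbf u\|_{\dot{\mathbf H}_\#^{1}}$ gives $\|\mathbf u\|_{\dot{\mathbf H}_\#^{1}}\le\pi^{-2}C_{\mathbb A}\|\mathbf f\|_{\dot{\mathbf H}_\#^{-1}}=:M_0$, uniformly in $\theta$.

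The main obstacle is the rigour of this energy identity at the $H^1$ level, rather than any hard estimate: the convective pairing $\langle(\mathbf u\cdot\nabla)\mathbf u,\mathbf u\rangle_{\T}$ and the viscous integration by parts must be justified for $\mathbf u\in\dot{\mathbf H}_{\#\sigma}^{1}$ only, not for smooth $\mathbf u$. I would handle this by density of $\dot{\mathcal C}^\infty_\#$ in the periodic Sobolev spaces together with the continuity of the trilinear form $(\mathbf v,\mathbf w,\mathbf z)\mapsto\langle(\mathbf v\cdot\nabla)\mathbf w,\mathbf z\rangle_{\T}$ on $\dot{\mathbf H}_\#^{1}$ for $n\le 4$ (a consequence of \eqref{eq:mik41}), letting the exact cancellation valid for smooth fields pass to the limit. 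With the uniform bound $M_0$ in hand, Theorem~\ref{th:mik2} delivers a fixed point $\mathbf u_0$, and the pressure associated to it through the Stokes solve completes the sought solution $(\mathbf u,p)\in\dot{\mathbf H}_{\#\sigma}^{1}\times\dot H_\#^0$.
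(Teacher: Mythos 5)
Your proposal is correct and takes essentially the same route as the paper's own proof: the same Leray--Schauder framework of Theorem~\ref{th:mik2} applied to the map $\mathbf w\mapsto\pmb{\mathcal U}(\mathbf f-\pmb{\mathit B}\mathbf w)$ built from the Stokes solution operator of Corollary~\ref{cor:mik1}, with compactness supplied by Corollary~\ref{Fcomp}. The a priori bound is also identical, down to the constant $M_0=\pi^{-2}C_{\mathbb A}\|\mathbf f\|_{\dot{\mathbf H}_\#^{-1}}$: the same three cancellations, the use of \eqref{eq:mik2} on the trace-free symmetric matrix $\mathbb E(\mathbf u)$, the Korn inequality \eqref{eq:mik15}, the norm equivalence \eqref{eq:mik14}, and the same density argument justifying the energy identity at the $\mathbf H_\#^1$ level.
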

\begin{proof}

We will reduce the analysis of the nonlinear equation \eqref{eq:mik35}
to the analysis of a nonlinear operator in the Hilbert space $\dot{\mathbf H}_{\#\sigma}^{1}$ and show that this operator has a fixed-point due to the Leray-Schauder Theorem.

Nonlinear equation \eqref{eq:mik35} can be re-written as 
\begin{align}
\label{eq:mik43}
&-\pmb{\mathcal L}({\mathbf u},p )=\mathbf{f}-\pmb{\mathit B}{\mathbf u}.
\end{align}

By Corollary \ref{cor:mik1}, the linear operator
\begin{align}\label{eq:mik44}
-\pmb{\mathcal L}:\dot{\mathbf H}_{\#\sigma}^{1}\times \dot H_\#^0 \to \dot{\mathbf H}_{\#}^{-1}
\end{align}
is an isomorphism.
Its inverse operator, $-\pmb{\mathcal L}^{-1}$, can be split into two operator components,
$$-\pmb{\mathcal L}^{-1}=\left(\pmb{\mathcal U}\atop\mathcal P\right),$$
where
$\pmb{\mathcal U}: {\mg\dot{\mathbf H}}_{\#}^{-1} \to \dot{\mathbf H}_{\#\sigma}^{1}$
and $\mathcal P:{\mg\dot{\mathbf H}}_{\#}^{-1} \to \dot H_\#^0 $ are linear continuous operators such that
$$-\pmb{\mathcal L}\left(\pmb{\mathcal U}\pmb{\mathcal F}\atop\mathcal P\pmb{\mathcal F}\right)=\pmb{\mathcal F}$$ 
for any $\pmb{\mathcal F}\in \dot{\mathbf H}_{\#}^{-1}$.
Applying the inverse operator, $-\pmb{\mathcal L}^{-1}$, to equation \eqref{eq:mik43}, we reduce it to the equivalent nonlinear system
\begin{align}
\label{eq:mik45}
{\mathbf u}
&={\mathbf U}{\mathbf u},\\
\label{eq:mik46}
p&= P{\mathbf u},
\end{align}
where ${\mathbf U}:\dot{\mathbf H}_{\#\sigma}^{1}\to \dot{\mathbf H}_{\#\sigma}^{1}$
and
$P:\dot{\mathbf H}_{\#\sigma}^{1}\to \dot H_\#^0 $
are the nonlinear operators defined as
\begin{align}
\label{eq:mik47}
&{\mathbf U}{\mathbf w}:=\pmb{\mathcal U}(\mathbf{f}-\pmb{\mathit B}{\mathbf w}),\\
\label{eq:mik48}
& P{\mathbf w}:=\mathcal P(\mathbf{f}-\pmb{\mathit B}{\mathbf w})
\end{align}
for the fixed $\mathbf f$.

Since $p$ is not involved in \eqref{eq:mik45}, we will first prove the existence of a solution ${\mathbf u}\in \dot{\mathbf H}_{\#\sigma}^{1}$ to this equation. Then we use \eqref{eq:mik46} as a representation formula for $p$, which gives the existence of the pressure field $p\in \dot H_\#^0 $.
In order to show the existence of a fixed point of the operator ${\mathbf U}$ and, thus, the existence of a solution of equation \eqref{eq:mik45},
we employ Theorem \ref{th:mik2}.

By Corollary~\ref{Fcomp}{\mg, for $n\in \{2,3\}$} the operator
$\pmb{\mathit B}:\dot{\mathbf H}_{\#\sigma}^{1}\to {\mathbf H}_{\#}^{-1}$
is bounded, continuous and compact.
Since $\mathbf f\in {\mathbf H}_{\#}^{-1}$ is fixed and the operator 
$\pmb{\mathcal U}: {\mathbf H}_{\#}^{-1} \to \dot{\mathbf H}_{\#\sigma}^{1}$
is linear and continuos, definition \eqref{eq:mik47} implies that the operator
${\mathbf U}:\dot{\mathbf H}_{\#\sigma}^{1}\to  \dot{\mathbf H}_{\#\sigma}^{1}$ 
is also bounded, continuous, and compact.

Next, we show that there exists a constant $M_0>0$ such that if ${\mathbf w}\in \dot{\mathbf H}_{\#\sigma}^{1}$ satisfies the equation
\begin{align}
\label{eq:mik49}
&{\mathbf w}=\theta{\mathbf U}{\mathbf w}
\end{align}
for some $\theta \in [0,1]$, then $\|{\mathbf w}\|_{\dot{\mathbf H}_{\#\sigma}^{1}}\le M_0$.
Let us denote
\begin{align}\label{eq:mik50}
q:=\theta P{\mathbf w}.
\end{align}
By applying the operator $-\pmb{\mathcal L}$ to equations \eqref{eq:mik49}-\eqref{eq:mik50} and by using relations \eqref{eq:mik47} and \eqref{eq:mik48}, we deduce that whenever the pair
$({\mathbf w},\theta)\in \dot{\mathbf H}_{\#\sigma}^{1}\times\R$ satisfies equation \eqref{eq:mik49}, then the equation
\begin{align*}
&-\pmb{\mathcal L}({\mathbf w},q)=\theta (\mathbf{f}-\pmb{\mathit B}{\mathbf w}),
\end{align*}
is also satisfied due to the isomorphism property of operator \eqref{eq:mik44}.
This equation should be understood in the sense of distribution, i.e.,
\begin{multline}
\label{eq:mik51}
\langle-\pmb{\mathcal L}({\mathbf w},q), \pmb\phi\rangle_{\T}
=\left\langle a_{ij}^{\alpha \beta }E_{j\beta }({\mathbf w}),E_{i\alpha }({\pmb\phi})\right\rangle _{\T }
-\langle {\mg q},{\rm{div}}\, {\pmb\phi} \rangle _{\T}\\
=\theta\langle\mathbf{f}-\pmb{\mathit B}{\mathbf w}, \pmb\phi\rangle_{\T}\quad\forall\,\pmb\phi\in ({\mathcal C}^\infty_\#)^n.
\end{multline}
Taking into account that the space $({\mathcal C}^\infty_\#)^n$ is dense in ${\mathbf H}_{\#}^{1}$ and the continuity of the dual products in \eqref{eq:mik51} with respect to $\pmb\phi\in{\mathbf H}_{\#}^{1}$ , equation \eqref{eq:mik51} should hold also for $\pmb\phi=\mathbf w\in \dot{\mathbf H}_{\#\sigma}^{1}$.
Then we obtain
\begin{align}
\label{eq:mik52}
&\left\langle a_{ij}^{\alpha \beta }E_{j\beta }({\mathbf w}),E_{i\alpha }({\mathbf w})\right\rangle _{\T }
=\theta \langle\mathbf{f}-\pmb{\mathit B}{\mathbf w},{\mathbf w}\rangle_{\T}.
\end{align}

Since ${\mathbf w}\in \dot{\mathbf H}_{\#\sigma}^{1}$, relation \eqref{eq:mik55} implies that
$ \langle\pmb{\mathit B}{\mathbf w},{\mathbf w}\rangle_{\T}=\langle({\mathbf w}\cdot \nabla ){\mathbf w},{\mathbf w}\rangle _{\T }=0$.
Then by using the norm equivalence \eqref{eq:mik14},  the Korn first inequality \eqref{eq:mik15}, the ellipticity condition \eqref{eq:mik2}, equation \eqref{eq:mik52},
and the H\"older inequality, we obtain for $\theta\ge 0$ that

\begin{align*}
\| \mathbf w\|^2_{\mg\dot{\mathbf H}_{\#}^{1}}
&\le\frac{1}{2\pi^2}\|\nabla {\mathbf w}\|^2_{(L_{2\#})^{\mg n\times n}}
\leq\frac{1}{\pi^2}\|{\mathbb E}({\mathbf w})\|_{(L_{2\#} )^{n\times n}}^2\nonumber\\
&
\leq \frac{1}{\pi^2}C_{\mathbb A}\left\langle a_{ij}^{\alpha \beta }E_{j\beta }({\mathbf w}),E_{i\alpha }({\mathbf w})
\right\rangle _{\T } 
\leq \frac{\theta}{\pi^2} C_{\mathbb A} \|{\mathbf f}\|_{\mg\dot{\mathbf H}_{\#}^{-1}} \|{\mathbf w}\|_{\mg\dot{\mathbf H}_{\#}^{1}}.
\end{align*}
Hence, for $\theta\in[0,1]$,
\begin{align*}
\| \mathbf w\|_{\mg\dot{\mathbf H}_{\#}^{1}}
&\le M_0:=\frac{1}{\pi^2}C_{\mathbb A} \|{\mathbf f}\|_{\mg\dot{\mathbf H}_{\#}^{-1}}.
\end{align*}

Therefore, the operator $\mathbf U :\dot{\mathbf H}_{\#\sigma}^{1}\to  \dot{\mathbf H}_{\#\sigma}^{1}$ satisfies the hypothesis of Theorem \ref{th:mik2} (with $B=\dot{\mathbf H}_{\#\sigma}^{1}$), and hence it has a fixed point ${\mathbf u}\!\in \!\dot{\mathbf H}_{\#\sigma}^{1}$, that is, ${\mathbf u}=\mathbf U {\mathbf u}$. Then with $p\in \dot H_\#^0 $ as in \eqref{eq:mik46}, we obtain that the couple $(\mathbf u, p)\in \dot{\mathbf H}_{\#\sigma}^{1}\times \dot H_\#^0 $ satisfies the nonlinear equation \eqref{eq:mik35}.
\hfill$\square$
\end{proof}

\subsection{Solution regularity for the stationary anisotropic Navier-Stokes system}

In this section, using the bootstrap argument we show that the regularity of a solution of  the anisotropic incompressible Navier-Stokes system on ${\T}^n$, $n\ge\mg 2$ is completely determined by the regularity of its right-hand side, as for the Stokes system.
To prove this we use the inclusions of the nonlinear term $\pmb{\mathit B} {\mathbf u}$ given by Theorem~\ref{th:mik3} and the unique solvability of corresponding (linear) Stokes system.

\begin{theorem}\label{th:mik5}
Let condition \eqref{eq:mik2} hold.
Let $n\ge 2$ and  ${\mg -1+n/2}<s_1<s_2$.

(i) If $({\mathbf u},p)\in \dot{\mathbf H}_{\#\sigma}^{s_1} \times \dot H_\#^{s_1-1} $ is a solution of the anisotropic Navier-Stokes equation \eqref{eq:mik35} with a right hand side $\mathbf f\in \dot{\mathbf H}_{\#}^{s_2-2} $,
then  $({\mathbf u},p)\in \dot{\mathbf H}_{\#\sigma}^{s_2} \times \dot H_\#^{s_2-1} $.

(ii) Moreover, if ${\mathbf f}\in(\dot{\mathcal C}_\#^\infty)^n$
then $({\mathbf u},p)\in(\dot{\mathcal C}_\#^\infty)^n\times \dot {\mathcal C}^\infty_\#$.
\end{theorem}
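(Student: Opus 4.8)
The plan is to run a bootstrap argument, alternating between the nonlinear mapping estimates of Theorem~\ref{th:mik3} and the Stokes isomorphism of Corollary~\ref{cor:mik1}, to successively raise the regularity index of $({\mathbf u},p)$ from $s_1$ up to $s_2$. For item (i), I would first rewrite the Navier--Stokes equation \eqref{eq:mik35} as the Stokes equation $-\pmb{\mathcal L}({\mathbf u},p)=\mathbf f-\pmb{\mathit B}{\mathbf u}$, so that the solution regularity is controlled by the regularity of the effective right-hand side $\mathbf f-\pmb{\mathit B}{\mathbf u}$. Given the hypothesis ${\mathbf u}\in\dot{\mathbf H}_{\#\sigma}^{s_1}$ with $s_1>-1+n/2$, I note that $s_1>n/2-1$ ensures $s_1>0$ (since $n\ge 2$); depending on whether $s_1$ lies below or above $n/2$, Theorem~\ref{th:mik3}(i) or (ii) gives $\pmb{\mathit B}{\mathbf u}\in\dot{\mathbf H}_\#^{2s_1-1-n/2}$ or $\pmb{\mathit B}{\mathbf u}\in\dot{\mathbf H}_\#^{s_1-1}$ respectively. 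The key quantitative observation is that when $s_1<n/2$ the gain in the nonlinear term is $2s_1-1-n/2=s_1+(s_1-1-n/2)>s_1$ precisely because $s_1>-1+n/2$ forces $s_1-1-n/2>-2$, i.e. the new index strictly exceeds $s_1-1$; so at each stage the regularity of $\pmb{\mathit B}{\mathbf u}$ is strictly better than the regularity previously available for $\mathbf f-\pmb{\mathit B}{\mathbf u}$ as a Stokes datum.

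The iteration proceeds as follows. Set $t_0:=s_1$. Given ${\mathbf u}\in\dot{\mathbf H}_{\#\sigma}^{t_k}$ with $t_k<s_2$, the nonlinear term satisfies $\pmb{\mathit B}{\mathbf u}\in\dot{\mathbf H}_\#^{r_k}$ where $r_k:=2t_k-1-n/2$ if $t_k<n/2$, and $r_k:=t_k-1$ if $t_k>n/2$. Hence $\mathbf f-\pmb{\mathit B}{\mathbf u}\in\dot{\mathbf H}_\#^{m_k}$ with $m_k:=\min\{s_2-2,\,r_k\}$, and Corollary~\ref{cor:mik1}(i) applied to the Stokes system yields $({\mathbf u},p)\in\dot{\mathbf H}_{\#\sigma}^{m_k+2}\times\dot H_\#^{m_k+1}$. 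I then set $t_{k+1}:=\min\{s_2,\,m_k+2\}$. When $t_k<n/2$ one has $r_k+2=2t_k+1-n/2=t_k+(t_k+1-n/2)$, and $t_k>-1+n/2$ gives $t_k+1-n/2>0$, so $t_{k+1}\ge\min\{s_2,\,t_k+\delta\}$ for the fixed increment $\delta=t_0+1-n/2>0$ as long as we stay below $n/2$; once $t_k$ exceeds $n/2$, the single step $r_k+2=t_k+1$ combined with the cap at $s_2-2$ reaches $t_{k+1}=\min\{s_2,\,s_2\}=s_2$ in one further application. Thus after finitely many steps $t_k=s_2$, giving $({\mathbf u},p)\in\dot{\mathbf H}_{\#\sigma}^{s_2}\times\dot H_\#^{s_2-1}$, which is the claim of (i). (A small technical point I would check is that raising the regularity of ${\mathbf u}$ does not change the solution: this follows from the embedding $\dot{\mathbf H}_{\#\sigma}^{t_{k+1}}\hookrightarrow\dot{\mathbf H}_{\#\sigma}^{t_k}$ and the uniqueness in Corollary~\ref{cor:mik1}(i) for the Stokes problem with the fixed datum $\mathbf f-\pmb{\mathit B}{\mathbf u}$.)

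The main obstacle is bookkeeping the increment and verifying that the bootstrap terminates rather than stalling: I must confirm that in the regime $t_k<n/2$ the increment $t_k+1-n/2$ is bounded below by the positive constant $\delta$ (which holds because the sequence $t_k$ is nondecreasing and $t_0>-1+n/2$), so only finitely many steps are needed to cross $n/2$, after which one or two more Stokes applications reach $s_2$. Care is also needed at the threshold $s=n/2$, where Theorem~\ref{th:mik3} is stated only for $s<n/2$ and $s>n/2$; I would simply choose the iterates $t_k$ to avoid the exact value $n/2$ (replacing a landing value of $n/2$ by $n/2-\varepsilon$ for small $\varepsilon>0$, still strictly above the previous iterate), which is harmless since the embeddings are monotone. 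For item (ii), the argument is immediate: if $\mathbf f\in(\dot{\mathcal C}_\#^\infty)^n$ then $\mathbf f\in\dot{\mathbf H}_\#^{s-2}$ for every $s\in\R$, so applying item (i) with $s_2$ arbitrarily large gives ${\mathbf u}\in\dot{\mathbf H}_{\#\sigma}^{s}$ and $p\in\dot H_\#^{s-1}$ for all $s$; since $\bigcap_{s\in\R}\dot H_\#^{s}=\dot{\mathcal C}_\#^\infty$, this yields $({\mathbf u},p)\in(\dot{\mathcal C}_\#^\infty)^n\times\dot{\mathcal C}_\#^\infty$.
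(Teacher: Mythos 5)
Your proposal follows essentially the same bootstrap as the paper's own proof: rewrite \eqref{eq:mik35} as a Stokes system with datum $\mathbf f-\pmb{\mathit B}\mathbf u$, use Theorem~\ref{th:mik3} to place $\pmb{\mathit B}\mathbf u$ in a higher-order space, lift the regularity of $({\mathbf u},p)$ via Corollary~\ref{cor:mik1}(i) together with uniqueness (a point you correctly flag), and iterate with increments bounded below. Your $\varepsilon$-perturbation at the threshold $s=n/2$ is equivalent to the paper's device of using $t_1\in(s_1-2,s_1-1)$ (e.g.\ $t_1=s_1-3/2$) in that case, and your item (ii) is argued exactly as in the paper.

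There is, however, one flaw in your termination bookkeeping above the threshold: you claim that once $t_k$ exceeds $n/2$, the single step $r_k+2=t_k+1$ ``combined with the cap at $s_2-2$ reaches $t_{k+1}=\min\{s_2,s_2\}=s_2$ in one further application.'' That computation implicitly assumes $r_k\ge s_2-2$, i.e.\ $t_k\ge s_2-1$, which need not hold since $s_2$ can be arbitrarily large: for instance, with $n=2$, $t_k=3/2$, $s_2=10$ one gets $t_{k+1}=\min\{10,\,5/2\}=5/2\ne s_2$. The claim is easily repaired --- for $t_k>n/2$ each application of Theorem~\ref{th:mik3}(ii) and Corollary~\ref{cor:mik1}(i) raises the index by exactly $1$ (until capped at $s_2$), so at most roughly $\lceil s_2-n/2\rceil$ further applications are needed rather than one --- and this is precisely what the paper encodes in its uniform increment $\delta=\min\{s_1+1-n/2,\,1,\,1/2\}>0$, which is nondecreasing along the iteration. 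With that correction your argument is complete and coincides in substance with the paper's proof.
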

\begin{proof}
(i) Let $({\mathbf u},p)\in \dot{\mathbf H}_{\#\sigma}^{s_1} \times \dot H_\#^{s_1 -1} $ be a solution of  \eqref{eq:mik35} with  $\mathbf f\in \dot{\mathbf H}_{\#}^{s_2-2} $.
Then by Theorem~\ref{th:mik3}, for the nonlinear term we have the inclusion $\pmb{\mathit B}\mathbf u\in \dot{\mathbf H}_\#^{t_1}$ with $t_1=2s_1-1-n/2$ if $s_1<n/2$, with $t_1=s_1-1$ if $s_1>n/2$, and with any $t_1\in (s_1-2,s_1-1)$ (and we can further use $t_1=s_1-3/2$ for certainty) if $s_1=n/2$. 
Hence the couple $({\mathbf u},p)$ satisfies the equation
\begin{align}
\label{eq:mik53}
&-\pmb{\mathcal L}({\mathbf u},p )=\mathbf{f}^{(1)}
\end{align}
with $\mathbf f^{(1)}:=\mathbf f -\pmb{\mathit B}\mathbf u\in \dot{\mathbf H}_\#^{s^{(1)}-2}$, where $s^{(1)}=\min\{s_2, t_1+2\}$.
By Corollary~\ref{cor:mik1}(i), the linear equation \eqref{eq:mik53} has a unique solution in 
$\dot{\mathbf H}_{\#\sigma}^{s}\times \dot H_\#^{s-1}$ for any $s\le s^{(1)}$ and thus 
$({\mathbf u},p)\in \dot{\mathbf H}_{\#\sigma}^{s^{(1)}}\times \dot H_\#^{s^{(1)}-1}$.
If $s^{(1)}=s_2$, which we call {\mg case} (a),  this proves item (i) of the theorem.

Otherwise we have {\mg case} (b), when $s^{(1)}<s_2$,  i.e., $s^{(1)}=t_1+2$,
by the definition of  $\mg s^{(1)}$.
%$t_1$ and the theorem condition $s_1>n/2-1$. 
Then  we arrange an iterative process by replacing $s_1$ with $s^{(1)}=t_1+2$ on each iteration until we arrive at {\mg case} (a), thus proving item (i) of the theorem.
Note that in {\mg case} (b),
\begin{align*}
s^{(1)}-s_1\ge \delta:=\min\{s_1+1-n/2, 1, 1/2\}>0
\end{align*} 
in the first iteration, and $\delta$ can only increase in the next iterations due to the increase of $s_1$.
This implies that the iteration process will reach the case (a) and stop after a finite number of iterations. 

(ii) If ${\mathbf f}\in(\dot{\mathcal C}_\#^\infty)^n$, then for any $s_2\in\R$ we have $\mathbf f\in \dot{\mathbf H}_{\#}^{s_2-2} $  and item (i) implies that $({\mathbf u},p)\in \dot{\mathbf H}_{\#\sigma}^{s_2} \times \dot H_\#^{s_2-1} $. 
Hence $({\mathbf u},p)\in(\dot{\mathcal C}_\#^\infty)^n\times \dot {\mathcal C}^\infty_\#$.
\hfill$\square$
\end{proof}

Combining Theorems \ref{th:mik4} and \ref{th:mik5}, we obtain the following assertion on existence and regularity of solution to the Navier-Stokes system on torus.
\begin{theorem}
\label{th:mik6}
Let $n\in \{2,3\}$ and condition \eqref{eq:mik2} hold.

(i) If $\mathbf f\in \dot{\mathbf H}_{\#}^{s-2} $, $s\ge 1$, then the anisotropic Navier-Stokes equation \eqref{eq:mik35} has a solution 
$({\mathbf u},p)\in \dot{\mathbf H}_{\#\sigma}^{s} \times \dot H_\#^{s-1} $.

(ii) Moreover, if ${\mathbf f}\in(\dot{\mathcal C}_\#^\infty)^n$ then equation \eqref{eq:mik35} has a solution 
$({\mathbf u},p)\in(\dot{\mathcal C}_\#^\infty)^n\times \dot {\mathcal C}^\infty_\#$.
\end{theorem}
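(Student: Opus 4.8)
The plan is to combine the existence result of Theorem~\ref{th:mik4} with the regularity bootstrap of Theorem~\ref{th:mik5}, since Theorem~\ref{th:mik6} is essentially their concatenation.

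First I would establish the base case. Given $\mathbf f\in \dot{\mathbf H}_\#^{s-2}$ with $s\ge 1$, note that $s-2\ge -1$, so the continuous inclusion $\dot{\mathbf H}_\#^{s-2}\hookrightarrow \dot{\mathbf H}_\#^{-1}$ recorded in Section~\ref{sec:mik3} guarantees $\mathbf f\in \dot{\mathbf H}_\#^{-1}$. Theorem~\ref{th:mik4} then yields a solution $(\mathbf u,p)\in \dot{\mathbf H}_{\#\sigma}^{1}\times \dot H_\#^{0}$ of \eqref{eq:mik35}. If $s=1$ this is exactly the claimed regularity and item (i) is proved; the substantive content lies in the range $s>1$.

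For $s>1$ I would invoke Theorem~\ref{th:mik5}(i) with $s_1=1$ and $s_2=s$. The hypothesis of that theorem requires $-1+n/2<s_1<s_2$; here $s_1=1<s=s_2$ holds by assumption, while $-1+n/2<1$ is equivalent to $n<4$ and is therefore satisfied precisely because $n\in\{2,3\}$. This is the one place where the dimensional restriction $n\in\{2,3\}$ is genuinely used: it is what reconciles the weak-solution threshold $s_1=1$ furnished by Theorem~\ref{th:mik4} with the lower bound on $s_1$ demanded by the regularity theorem. Applying Theorem~\ref{th:mik5}(i) then upgrades the solution to $(\mathbf u,p)\in \dot{\mathbf H}_{\#\sigma}^{s}\times \dot H_\#^{s-1}$, completing item (i).

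For item (ii), since $\mathbf f\in(\dot{\mathcal C}_\#^\infty)^n\subset \dot{\mathbf H}_\#^{-1}$, Theorem~\ref{th:mik4} again produces a weak solution in $\dot{\mathbf H}_{\#\sigma}^{1}\times \dot H_\#^{0}$, and then Theorem~\ref{th:mik5}(ii), whose smoothness conclusion is already tailored to data in $(\dot{\mathcal C}_\#^\infty)^n$, immediately gives $(\mathbf u,p)\in(\dot{\mathcal C}_\#^\infty)^n\times \dot{\mathcal C}^\infty_\#$. I do not anticipate any serious obstacle here: the argument is purely a matching of Sobolev indices, and the only point requiring care is confirming that the weak-solution existence threshold $s_1=1$ clears the regularity theorem's lower bound $-1+n/2$, which is exactly the role played by the restriction to $n\in\{2,3\}$.
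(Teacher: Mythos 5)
Your proposal is correct and follows exactly the paper's own route: the paper proves Theorem~\ref{th:mik6} precisely by combining the existence result of Theorem~\ref{th:mik4} with the bootstrap regularity of Theorem~\ref{th:mik5}, which is what you do. Your explicit verification that $s_1=1$ satisfies the hypothesis $-1+n/2<s_1$ exactly when $n\in\{2,3\}$ is a detail the paper leaves implicit, and you identify its role correctly.
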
 

Note that in the {\em isotropic case} \eqref{eq:mik7} with $\lambda=0$, similar results for the Navier-Stokes in torus as well as in domains of $\R^n$  are available, e.g., in \cite{Galdi2011, RRS2016, Seregin2015, Sohr2001,  Temam2001}.

\section{Some auxiliary results}\label{sec:mik6}

The dense embedding of the space $({\mathcal C}^\infty_\#)^n$ into ${\mathbf H}_{\#}^{1}$ and the divergence theorem 
imply the following identity for any ${\mathbf v}_1,{\mathbf v}_2, {\mathbf v}_3\in {\mathbf H}_{\#}^{1}$.
\begin{align}
\label{eq:mik54}
\left\langle({\mathbf v}_1\cdot \nabla ){\mathbf v}_2,{\mathbf v}_3\right\rangle _{\T}
&=\int_{\T}\nabla\cdot\left({\mathbf v}_1({\mathbf v}_2\cdot {\mathbf v}_3)\right)d{\mathbf x}
-\left\langle(\nabla \cdot{\mathbf v}_1){\mathbf v}_3
+({\mathbf v}_1\cdot \nabla ){\mathbf v}_3,{\mathbf v}_2\right\rangle _{\T}\nonumber
\\
&=
-\left\langle({\mathbf v}_1\cdot \nabla ){\mathbf v}_3,{\mathbf v}_2\right\rangle _{\T}
-\left\langle(\nabla \cdot{\mathbf v}_1){\mathbf v}_3,{\mathbf v}_2\right\rangle _{\T}.
 \end{align}
In view of \eqref{eq:mik54} we obtain the identity
\begin{align*}
\left\langle({\mathbf v}_1\cdot \nabla ){\mathbf v}_2,{\mathbf v}_3\right\rangle _{\mg\T}
&\!\!=\!-\left\langle({\mathbf v}_1\cdot \nabla ){\mathbf v}_3,{\mathbf v}_2\right\rangle _{\mg\T}\quad
 \forall\ {\mathbf v}_1\in {\mathbf H}_{\#\sigma}^{1},\
{\mathbf v}_2,\, {\mathbf v}_3\in {\mathbf H}_{\#}^{1}\,,
\end{align*}
and hence the well known formula
\begin{equation}
\label{eq:mik55}
\left\langle ({\mathbf v}_1\cdot \nabla ){\mathbf v}_2,{\mathbf v}_2\right\rangle _{\mg\T}=0\quad \forall \,
{\mathbf v}_1\!\in \!{\mathbf H}_{\#\sigma}^{1},\ {\mathbf v}_2\in {\mathbf H}_{\#}^{1}.
\end{equation}

\par\egroup
\end{document}